\newtheorem{thm}{Theorem}[section]
\newtheorem{lem}[thm]{Lemma}
\newtheorem{prop}[thm]{Proposition}
\newtheorem{cor}[thm]{Corollary}
\theoremstyle{definition}
\newtheorem{defn}[thm]{Definition}
\newtheorem{rem}[thm]{Remark}
\numberwithin{equation}{section}
\newcommand{\R}{\mathbb{R}}
\newcommand{\dist}{\operatorname{dist}}
\newcommand{\loc}{\operatorname{loc}}
\renewcommand{\L}{\operatorname{L}} 
\renewcommand{\H}{\operatorname{H}}
\renewcommand{\H}{\operatorname{H}} 
\renewcommand{\d}{\, \mathrm{d}} 
\newcommand{\tor}{\textup{tor}}
\renewcommand{\Cap}{\text{cap}}
\renewcommand{\limsup}
{\textup{limsup}}
\def\Xint#1{\mathchoice
{\XXint\displaystyle\textstyle{#1}}%
{\XXint\textstyle\scriptstyle{#1}}%
{\XXint\scriptstyle\scriptscriptstyle{#1}}%
{\XXint\scriptscriptstyle%
\scriptscriptstyle{#1}}%
\!\int}
\def\XXint#1#2#3{{\setbox0=\hbox{$#1{#2#3}{%
\int}$ }
\vcenter{\hbox{$#2#3$ }}\kern-.6\wd0}}
\def\overlineint{\,\Xint -} 
\def\overlineiint{\overlineint_{} \kern-.4em \overlineint}
\def\overlineiiint{\overlineiint_{} \kern-.4em \overlineint}
\renewcommand{\iint}{\int_{}\kern-.34em \int} 
\renewcommand{\iiint}{\iint_{}\kern-.34em \int} 
\title{A comparison method for the fractional Laplacian and applications}
\author{Alireza Ataei}
\email{alireza.ataei@math.uu.se}
\address{Department of Mathematics, Uppsala University, S-751 06 Uppsala,
Sweden}
\author{Alireza Tavakoli}
\email{alirezat@kth.se}
\address{Department of Mathematics, KTH Royal Institute of Technology, Stockholm, Sweden}
\keywords{Nonlinear eigenvalue problems, Hopf's Lemma, fractional Laplacian}
\subjclass[2010]{Primary: 35R11; Secondary: 35P30, 35B51}
\date{\today}
\begin{document}

\maketitle

\begin{abstract}
    We study the boundary behavior of solutions to fractional Laplacian. As the first result, the isolation of the first eigenvalue of the fractional Lane-Emden equation is proved in the bounded open sets with Wiener regular boundary. Then, a generalized Hopf's lemma and a global boundary Harnack inequality are proved for the fractional Laplacian.
\end{abstract}

\section{Introduction}
The starting point of our investigation was the problem of isolation of the first eigenvalue of the fractional Lane-Emden equation. Let $1<q <2$, $0<s<1$, and $(-\Delta)^s$ be the fractional $s$-Laplacian. We consider the following minimization problem
\begin{equation}\label{eq:fist-eignevalue}
\begin{aligned}
\Lambda_q:=\inf_{\phi \in C^{\infty}_0(\Omega)} \Bigl\lbrace \iint_{\R^n \times \R^n} \frac{|\phi(x)-\phi(y)|^2 }{|x-y|^{n+2s}} \, \d x \d y \, : \|\phi\|_{\L^q(\Omega)}=1 \Bigr\rbrace,
\end{aligned}
\end{equation}
for a bounded open set $\Omega \subset \R^n$ and define $\mathcal{D}^{s,2}_0(\Omega)$ as the completion of $C^{\infty}_0(\Omega)$ with respect to the semi-norm
\begin{align*}
    [\phi]_{\H^s(\R^n)}:= \biggl(\iint_{\R^n \times \R^n}\frac{|\phi(x)-\phi(y)|^2 }{|x-y|^{n+2s}} \, \d x \d y\biggr)^{1/2}, \quad \phi \in C^{\infty}_0(\R^n).
\end{align*}
We say that $\Omega$ has Wiener regular boundary for $s$-fractional Laplacian if weak solutions of $(-\Delta)^s u= f$ in $\Omega$, where $u \in C(\R^n \setminus \Omega)$ and $f \in \L^{\infty}(\Omega)$, belong to $C(\R^n)$, see Definition \ref{def:regularbound} for more details.

In the first part of this work, we prove that the first eigenvalue $\Lambda_q$ is isolated. 
\begin{thm}
\label{thm:isolation}
Assume that $\Omega \subset \R^n$ has Wiener regular boundary for $s$-fractional Laplacian. Then, there are no sequences $\lambda_i \in \R^+  \setminus \{\Lambda_q\}, v_i \in \mathcal{D}^{s,2}_0(\Omega)$ such that 
   \begin{equation}
\begin{aligned}
\|v_i\|_{\L^q(\Omega)}&= 1,\\
(-\Delta)^s v_i &= \lambda_i |v_i|^{q-2} v_i, \quad \text{in } \Omega,\\
 \lim_{i \to \infty} \lambda_i &= \Lambda_q.\\
\end{aligned}   
   \end{equation}
\end{thm}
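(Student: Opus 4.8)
The plan is to argue by contradiction, combining a compactness argument with the strong maximum principle and, decisively, with the nonlocality of $(-\Delta)^s$.

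I would first collect the standard variational preliminaries. By the direct method, using the compact embedding $\mathcal{D}^{s,2}_0(\Omega)\hookrightarrow\L^q(\Omega)$ (valid since $q<2$ and $\Omega$ is bounded), the infimum $\Lambda_q$ is attained; a minimizer $u$ solves $(-\Delta)^s u=\Lambda_q|u|^{q-2}u$ with Lagrange multiplier exactly $\Lambda_q$ (test with $u$), and since $[\,|u|\,]_{\H^s(\R^n)}\le[u]_{\H^s(\R^n)}$ the function $|u|$ is again a minimizer, hence a nonnegative first eigenfunction, so by the strong maximum principle for the fractional Laplacian it is strictly positive in $\Omega$. Because $1<q<2$ the nonlinearity $t\mapsto|t|^{q-2}t$ is sublinear, so a Moser/Brezis--Kato iteration gives $\L^\infty(\Omega)$ bounds on every eigenfunction normalized by $\|\cdot\|_{\L^q(\Omega)}=1$, with the bound depending only on the eigenvalue; combined with the defining property of a Wiener regular boundary this yields $v\in C(\R^n)$ with $v=0$ on $\partial\Omega$. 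Finally I would recall the hidden-convexity (D\'iaz--Sa\'a/Picone) principle: $w\mapsto[\,w^{1/2}\,]_{\H^s(\R^n)}^2-\tfrac{2\lambda}{q}\int_\Omega w^{q/2}$ is convex on $\{w\ge0\}$ and any positive solution of the equation minimizes it, which forces $\lambda\le\Lambda_q$ for any eigenvalue $\lambda$ possessing a nonnegative nontrivial eigenfunction normalized in $\L^q(\Omega)$; since testing the equation with that eigenfunction gives $\lambda\ge\Lambda_q$, the value $\Lambda_q$ is the only eigenvalue with a nonnegative eigenfunction.

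Now suppose, toward a contradiction, that $(\lambda_i,v_i)$ as in the statement exist. Testing with $v_i$ gives $[v_i]_{\H^s(\R^n)}^2=\lambda_i$, so $(v_i)$ is bounded in $\mathcal{D}^{s,2}_0(\Omega)$; passing to a subsequence (not relabelled), $v_i\rightharpoonup v$ in $\mathcal{D}^{s,2}_0(\Omega)$, $v_i\to v$ in $\L^q(\Omega)$ and a.e., so $\|v\|_{\L^q(\Omega)}=1$, and by weak lower semicontinuity and the definition of $\Lambda_q$ we get $[v]_{\H^s(\R^n)}^2=\Lambda_q$; hence $v$ is a first eigenfunction and, $\mathcal{D}^{s,2}_0(\Omega)$ being Hilbert, $v_i\to v$ \emph{strongly}. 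Since $|v|$ is also a minimizer and $|v|>0$ in $\Omega$, the equality $[\,|v|\,]_{\H^s(\R^n)}=[v]_{\H^s(\R^n)}$ forces $v(x)v(y)\ge0$ for a.e.\ $(x,y)$, so $v$ does not change sign; replacing $v_i$ by $-v_i$ if necessary, we may assume $v\ge0$ in $\R^n$, whence $\min_{\R^n}v=0$. By the uniqueness principle above, $v_i$ changes sign for all $i$ large. I would then upgrade the convergence to uniform: $w_i:=v_i-v$ solves $(-\Delta)^s w_i=g_i$ in $\Omega$ with $g_i:=\lambda_i|v_i|^{q-2}v_i-\Lambda_q v^{q-1}$, which tends to $0$ a.e.\ and is bounded in $\L^\infty(\Omega)$ uniformly in $i$, hence $g_i\to0$ in $\L^p(\Omega)$ for every $p<\infty$; using the Green-function comparison $G_\Omega(x,y)\le G_{\R^n}(x,y)=c_{n,s}|x-y|^{2s-n}$ and choosing $p>n/(2s)$, H\"older's inequality gives $\|w_i\|_{\L^\infty(\R^n)}\le C\|g_i\|_{\L^p(\Omega)}\to0$, i.e.\ $v_i\to v$ uniformly on $\R^n$.

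The contradiction comes from nonlocality. For $i$ large, let $x_i\in\Omega$ be a point at which $v_i$ attains its (negative) minimum over $\R^n$; interior regularity makes $(-\Delta)^s v_i(x_i)$ classically defined, and it equals $-\lambda_i|v_i(x_i)|^{q-1}$. Splitting the principal-value integral that defines $(-\Delta)^s v_i(x_i)$ into the part over $\{v_i\le0\}$, whose integrand is $\le0$ because $v_i(x_i)$ is a global minimum, and the part over $\{v_i>0\}\subseteq\Omega$, where $|x_i-y|\le\diam\Omega$ and $v_i(x_i)-v_i(y)\le-v_i^+(y)$, one obtains $(-\Delta)^s v_i(x_i)\le-\,c_{n,s}(\diam\Omega)^{-n-2s}\,\|v_i^+\|_{\L^1(\R^n)}$. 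Since $\|v_i^+\|_{\L^1(\R^n)}\to\|v\|_{\L^1(\R^n)}>0$ and $\lambda_i\to\Lambda_q$, this forces $|v_i(x_i)|\ge\delta_0$ for some fixed $\delta_0>0$ and all $i$ large, i.e.\ $\min_{\R^n}v_i\le-\delta_0<0$, which contradicts $v_i\to v$ uniformly together with $\min_{\R^n}v=0$. I expect the main obstacle to be precisely this upgrade from weak to uniform convergence without any regularity of $\partial\Omega$ beyond Wiener regularity — which is why the comparison $G_\Omega\le G_{\R^n}$, rather than boundary H\"older estimates, is the right device — with a secondary technical point being to justify the hidden-convexity/uniqueness input and the pointwise evaluation of $(-\Delta)^s v_i$ at this regularity level.
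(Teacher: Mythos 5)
Your proposal is correct, and its skeleton (contradiction, energy bound, strong $\L^q$/a.e.\ convergence to a positive first eigenfunction, $\L^\infty$ estimates upgrading to uniform convergence, sign-change of the $v_i$ via the uniqueness of the eigenvalue admitting a nonnegative eigenfunction) coincides with the paper's. The decisive final step, however, is genuinely different. The paper maximizes $\tfrac{1}{i}u-v_i$ over $\overline{\Omega}$, shows the maximum points $x_i$ must accumulate at a boundary point $\tilde{x}$, and derives the contradiction by splitting the nonlocal integral over a fixed compact $K\Subset\Omega$ and over its complement, the term $-\int_K u(y)|\tilde{x}-y|^{-n-2s}\,\d y<0$ producing the absurdity; this is the mechanism the paper later abstracts into Lemma \ref{thm:stabilityharnack}. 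You instead evaluate the equation at a global \emph{negative minimum} $x_i$ of $v_i$ itself and bound $(-\Delta)^s v_i(x_i)\leq -2(\diam\Omega)^{-n-2s}\|v_i^+\|_{\L^1}$, with $\|v_i^+\|_{\L^1}\to\|v\|_{\L^1}>0$ forcing $\min v_i\leq-\delta_0$, contradicting uniform convergence to $v\geq0$. Your route is shorter and avoids both the auxiliary function $\tfrac1i u-v_i$ and the boundary-limit analysis of the three integrals $\mathcal{I}_1,\mathcal{I}_2,\mathcal{I}_3$; it is essentially the quantitative strong-maximum-principle computation that the paper records separately as Lemma \ref{lem:weakhopf}, so what you lose is only the reusable stability lemma that the paper needs anyway for its Section 6 results. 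Two small points to tighten: the Green-function comparison $G_\Omega\leq c_{n,s}|x-y|^{2s-n}$ only makes literal sense when $2s<n$, so for $n=1$, $2s\geq n$ you should instead invoke the $\L^p$-to-$\L^\infty$ bound directly (as the paper does via \cite[Thm.~3.1]{BP}, which covers all cases); and note that the existence of an interior minimum point $x_i\in\Omega$ uses the continuity of $v_i$ up to $\partial\Omega$, i.e.\ exactly the Wiener regularity hypothesis, which you do have but should cite at that step.
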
 

The isolation property for the fractional Lane-Emden equation has already been proved in \cite{FL} for $C^{1,1}$ domains. The approach there is to study the compactness of the embedding $\mathcal{D}^{s,2}_0(\Omega) $ into a weighted space $ \L^2(\Omega;w)$.  The major difficulty with the isolation is the boundary regularity of the positive solutions for $1<q<2$. In fact, Hopf's type lemmas have been used (or assumed) in all the works concerning isolation of the first eigenvalue for the range $1<q<2$, see \cite{BDF,FL}. In the case of local Laplacian, in \cite{BDF} the compactness of this embedding has been studied in detail, and a sharp sufficient condition has been obtained for bounded Lipschitz domains in terms of a uniform interior cone condition. However, for the fractional Laplacian, one needs a $C^{1,1}$ condition on the boundary to obtain Hopf's lemma. We generalize the result for the fractional case to open sets with rougher boundaries such as Lipschitz boundaries. A different method is used to get this generalization, which is simpler than previous ones. We also provide a very short proof for the isolation of the first eigenvalue for the case of $C^{1,1}$ domains, which does not use the weighted compact embedding, see Remark \ref{rmk:easyproof}.  Note that the simplicity of the first eigenvalue $\Lambda_q$, as well as the nonlinearity of the equation, motivates us to restrict ourselves to $1<q<2$.

In application, the isolation of the first eigenvalue of the nonlocal Lane-Emden equation is relevant to the study of the long-time asymptotic behavior of the fractional porous-medium equation, see \cite{FV}.

The second part is motivated by the study of fractional Reaction-Diffusion problems. Let $u$ be a non-negative solution of $(-\Delta)^s u = f(u)$ in a bounded set $\Omega \subset \R^n$, where $f(0)=0.$ The dead core, the set $\{u=0\},$ determines if the reaction takes place. Hence, it is interesting to prove that $u$ is strictly positive on $\Omega$.

There are two main results in this section. First, we prove that a generalized Hopf's lemma holds for viscosity supersolutions for fractional Laplacian. To bring the main results, let $\Omega \subset \R^n$ be a bounded open set and $0<s<1$. Notice that the positive minimizer of \eqref{eq:fist-eignevalue} associated with $\Lambda_1$ say $\Phi_1$ satisfies the following equation
\[
\begin{aligned}
    (- \Delta)^s \Phi_1 = \Lambda_1, \quad & \text{in } \Omega, \\
    \Phi_1 =0, \quad &\text{in } \R^n \setminus \Omega.
\end{aligned}
\]
After a rescaling, the equation reduces to the following problem
\begin{align*}
    (-\Delta)^s u_{\tor} = 1,\quad &\text{in } \Omega,\\
    u_{\tor}=0, \quad &\text{in } \R^n \setminus \Omega.
\end{align*}
Define $K \Subset \Omega$, for a subset $K \subset \Omega$, if $\overline{K} \subset \Omega$. We prove the following generalization of Hopf's lemma.
\begin{lem}
\label{lem:Hopf's lemma}
    Let $u \in \L^1_{2s}(\R^n) \cap C(\overline \Omega)$ be a non-negative function and $K \Subset \Omega.$ Assume that $(-\Delta)^s u \geq f$ in $\Omega$ in the viscosity sense, where $f \in C(\Omega)$ satisfies 
\begin{align*}
    \textup{limsup}_{x \to x_0} f(x) \geq -2\int_{K} \frac{u(y)}{|x_0 - y|^{n+2s}} \, \d y, \quad & \text{if } x_0 \in \partial \Omega, \lim_{x \to x_0} u(x) =0,\\
    f(x_0) > -2 \int_{\R^n} \frac{u(y)}{|x_0-y|^{n+2s}} \, \d y, \quad &\text{if } x_0 \in \Omega, \, u(x_0)=0.
\end{align*}
Then, $u>0$ in $\Omega$ and
\begin{align*}
   u \geq C u_{\tor}, \quad \text{in } \Omega,
\end{align*}
    for a constant $C>0.$
\end{lem}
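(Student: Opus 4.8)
The plan is to prove this via a comparison argument with the torsion function $u_{\tor}$, exploiting the comparison method that is presumably the technical heart of the paper. First I would reduce the problem to a global statement about the fractional Laplacian by observing that $u$ satisfies $(-\Delta)^s u \geq f$ in $\Omega$ in the viscosity sense, and I want to compare $u$ with $\eps u_{\tor}$ for small $\eps>0$. The key trick is to extend $u$ by zero-like behavior: since $u_{\tor}$ vanishes outside $\Omega$, the issue is that $u$ need not vanish there, but we can write $w := u - \eps u_{\tor}$ and show $w \geq 0$ in $\Omega$ for $\eps$ small. To do this we estimate $(-\Delta)^s w = (-\Delta)^s u - \eps (-\Delta)^s u_{\tor} \geq f - \eps$ in the viscosity sense inside $\Omega$, and outside $\Omega$ we have $w = u \geq 0$. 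The subtlety the hypotheses are designed for is precisely the boundary term: the nonlocal operator applied to $\eps u_{\tor}$ ``sees'' the region $K$ where $u$ is positive, and that is the origin of the $-2\int_K u(y)|x_0-y|^{-n-2s}\,\d y$ on the right-hand side.

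The core of the argument is a contradiction/minimum-principle step. Suppose $\inf_\Omega w < 0$; since $u \in C(\overline\Omega)$ and $u_{\tor} \in C(\overline\Omega)$ (it is a bounded solution with zero exterior data, continuous up to $\partial\Omega$ under Wiener regularity — but here I only need $u_{\tor}\to 0$ at the boundary, which is standard), the negative infimum is either attained at an interior point $x_0 \in \Omega$ or approached along a sequence $x_j \to x_0 \in \partial\Omega$. In the interior case, at a point where $w$ has an interior minimum I can test the viscosity inequality: if $w(x_0) < 0$ then either $u(x_0) > 0$ (and then by making $\eps$ smaller relative to $\|u_{\tor}\|_\infty$ we push $w$ up, but this requires care since the infimum could still be negative — so instead I localize), or $u(x_0) = 0$, in which case I use the strict inequality $f(x_0) > -2\int_{\R^n} u(y)|x_0-y|^{-n-2s}\,\d y$. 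At such a minimum point of $w$, the function $\eps u_{\tor}$ touches $u$ from below, so I can evaluate $(-\Delta)^s u$ against the test function $\eps u_{\tor}$, compute $(-\Delta)^s(\eps u_{\tor})(x_0)$ using its pointwise defining integral (it equals $\eps$ minus a correction involving the negative part contributed by the region where $u > \eps u_{\tor}$), and derive the numerical contradiction with the hypothesis on $f$. For the boundary case $x_0 \in \partial\Omega$ with $u(x_0) = 0$ I use the $\limsup$ hypothesis with the constant $-2\int_K u(y)|x_0-y|^{-n-2s}\,\d y$; here only the fixed compact $K$ enters because near the boundary $u_{\tor}$ is tiny outside $K$ and the relevant lower bound on $(-\Delta)^s(\eps u_{\tor})$ only needs the contribution from $K$.

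More precisely, the cleanest route is: fix any $K \Subset \Omega$ with $K$ of positive measure where $u > 0$ (guaranteed once we know $u \not\equiv 0$; if $u \equiv 0$ the hypotheses force $f \geq 0$ and $f(x_0) > 0$ at every point, contradicting $(-\Delta)^s 0 = 0 \geq f$, so $u$ has a point of positivity and by continuity a whole ball, which we absorb into $K$). Then set $c_0 := \min_{x \in \Omega} \bigl( f(x) + 2\int_K u(y)|x-y|^{-n-2s}\,\d y\bigr)$-type quantity, show it is positive (using the two hypotheses and a compactness argument on $\overline\Omega$, handling the boundary via the $\limsup$ condition), choose $\eps$ so that $\eps u_{\tor} \leq$ something controlled by this margin and by the contribution of $u$ on $K$, and run the comparison to conclude $u \geq \eps u_{\tor}$. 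The strict positivity $u > 0$ in $\Omega$ is then immediate since $u_{\tor} > 0$ in $\Omega$ by the strong maximum principle for $(-\Delta)^s$.

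The main obstacle I anticipate is the interplay between the viscosity formulation and the pointwise nonlocal computation at the touching point: one must justify that at an interior minimum of $w = u - \eps u_{\tor}$, the function $x \mapsto \eps u_{\tor}(x) + (w(x_0))$ (a valid test function touching $u$ from below, smooth since $u_{\tor}$ is $C^{2s+\delta}_{\loc}$ in $\Omega$ by interior regularity for the torsion problem) can be plugged into the viscosity inequality, and then that $(-\Delta)^s$ of this test function is computed with the \emph{full} tail of $u$ (not of the test function) on $\R^n \setminus \{$neighborhood$\}$ — this is the standard ``nonlocal viscosity evaluation with tail replacement'' and is exactly where the factor $2$ and the sign of the tail integral come from. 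A secondary technical point is ensuring $u_{\tor}$ is an admissible strict subsolution after scaling, i.e. controlling $(-\Delta)^s u_{\tor}$ both in the viscosity sense in $\Omega$ and pointwise near $\partial\Omega$; this should follow from known regularity of the torsion function and the explicit value $(-\Delta)^s u_{\tor} = 1$ in $\Omega$.
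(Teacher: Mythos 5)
Your overall strategy (compare $u$ with $\eps u_{\tor}$, analyze a touching point of viscosity solutions, use the interior regularity of $u_{\tor}$ and the ``tail replacement'' in the nonlocal viscosity evaluation) is the right one, and it matches the paper's in spirit. But the fixed-$\eps$ comparison you propose has a genuine gap at interior touching points. Since $u_{\tor}=0$ on $\partial\Omega$, the function $w=u-\eps u_{\tor}$ equals $u\geq 0$ on $\partial\Omega$ and on $\R^n\setminus\Omega$, so a negative minimum of $w$ is necessarily attained at an \emph{interior} point $x_0\in\Omega$; your boundary case is vacuous and the $K$-hypothesis on $f$ is never actually used. At such an interior $x_0$ one typically has $u(x_0)>0$ (only $u(x_0)<\eps u_{\tor}(x_0)$ is forced), and there the hypotheses give no information on $f$ at all. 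Moreover, running the viscosity inequality with the test function $\eps u_{\tor}+w(x_0)$ and using $u(x_0)-u(y)\leq\eps\,(u_{\tor}(x_0)-u_{\tor}(y))$ only yields $f(x_0)\leq\eps$, which is consistent with, say, $f\equiv -1$ satisfying all the hypotheses; so no contradiction is available. Your parenthetical ``so instead I localize'' is exactly where the argument breaks, and your proposed margin $c_0=\min_\Omega\bigl(f+2\int_K u\,|x-y|^{-n-2s}\,\d y\bigr)$ need not be positive at generic interior points.

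The fix is the paper's two-step structure. First, positivity $u>0$ in $\Omega$ is \emph{not} a consequence of the comparison; it is proved up front by applying the global minimum principle (Lemma \ref{lem:maxprinciple}) to $u$ itself: if $u(x_0)=0$ at an interior point then $x_0$ is a global minimum of the non-negative $u$, whence $-2\int_{\R^n}u(y)|x_0-y|^{-n-2s}\,\d y\geq f(x_0)$, contradicting the strict interior hypothesis. Second, the comparison is run by contradiction along a \emph{sequence} of constants $C_i\to\infty$ (equivalently $\eps_i\to 0$), as in Lemma \ref{thm:stabilityharnack}: if $\tfrac{1}{C_i}u_{\tor}-u$ has a positive maximum at $x_i$, then $u(x_i)<\tfrac{1}{C_i}\|u_{\tor}\|_\infty\to 0$, so the already-established strict positivity of $u$ forces $x_i\to\partial\Omega$; only there does the $\limsup$ hypothesis involving $K$ enter, and the three-way splitting of the nonlocal integral (small ball, complement of $K$, and $K$) produces the contradiction. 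This limiting mechanism, which pushes the touching points to the boundary, is the missing idea in your fixed-$\eps$ scheme.
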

 Our particular interest is when $f = g(u) + h$ for functions $g \in C([0,\infty)), h \geq 0.$
\begin{cor}
\label{cor:semilinearhopf}
      Let $u \in \L^1_{2s}(\R^n) \cap C(\overline \Omega)$ be non-negative and $g \in C([0,\infty))$. Assume that $g(0)=0$ and $$(-\Delta)^s u \geq g(u), \quad \text{in } \Omega,$$ in the viscosity sense. Then, either $u=0$ a.e. in $\R^n$ or $u>0$ in $\Omega$ and $
   u \geq C u_{\tor}$ in  $\Omega$ for a constant $C>0.$
\end{cor}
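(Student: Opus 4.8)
The plan is to apply Lemma~\ref{lem:Hopf's lemma} with the choice $f := g(u)$. Fix once and for all some $K \Subset \Omega$, say a closed ball, which exists since $\Omega$ is a nonempty bounded open set. As $u \in C(\overline{\Omega})$ is non-negative and $g \in C([0,\infty))$, the composition $f = g(u)$ is continuous on $\overline{\Omega}$, hence in particular $f \in C(\Omega)$, and the assumed inequality $(-\Delta)^s u \geq g(u)$ in $\Omega$ is precisely the viscosity inequality $(-\Delta)^s u \geq f$ of the lemma (with $f(x)=g(u(x))$). Moreover $u \in \L^1_{2s}(\R^n) \cap C(\overline{\Omega})$ is non-negative, exactly as required. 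So it only remains to verify the two sign conditions imposed on $f$ in the lemma, which is where the hypotheses $g(0)=0$ and $u \geq 0$ enter.

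If $u = 0$ almost everywhere in $\R^n$, we are in the first alternative of the statement and there is nothing to prove, so assume $u$ is not a.e.\ zero. Since $u \geq 0$, for every $x_0 \in \R^n$ we then have $\int_{\R^n} \frac{u(y)}{|x_0-y|^{n+2s}} \, \d y \in (0,\infty]$ (it is the integral of an a.e.\ positive function over a set of positive measure), while for $x_0 \in \partial\Omega$ the integral $\int_{K} \frac{u(y)}{|x_0-y|^{n+2s}} \, \d y$ is finite and $\geq 0$, because $\dist(x_0,K)>0$ and $u$ is bounded on the compact set $K$. Now check the hypotheses. If $x_0 \in \Omega$ with $u(x_0)=0$, then $f(x_0) = g(u(x_0)) = g(0) = 0 > -2 \int_{\R^n} \frac{u(y)}{|x_0-y|^{n+2s}} \, \d y$, the strict inequality holding precisely because the integral is strictly positive. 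If $x_0 \in \partial\Omega$ and $\lim_{x\to x_0} u(x)=0$, then continuity of $g$ together with $g(0)=0$ gives $\lim_{x\to x_0} f(x) = \lim_{x\to x_0} g(u(x)) = 0$, whence $\limsup_{x\to x_0} f(x) = 0 \geq -2 \int_{K} \frac{u(y)}{|x_0-y|^{n+2s}} \, \d y$ because the integrand is non-negative. Thus all hypotheses of Lemma~\ref{lem:Hopf's lemma} are met, and it yields $u>0$ in $\Omega$ together with $u \geq C u_{\tor}$ in $\Omega$ for some constant $C>0$, which is the second alternative.

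There is no real obstacle here: the corollary is an immediate specialization of Lemma~\ref{lem:Hopf's lemma}. The one point worth isolating is that the non-triviality of $u$ is exactly what upgrades the trivially valid inequality $0 \geq -2 \int_{\R^n} \frac{u(y)}{|x_0-y|^{n+2s}} \, \d y$ to the \emph{strict} inequality demanded at interior zeros of $u$; this is why the statement is naturally phrased as the dichotomy ``$u=0$ a.e.\ or $u>0$ in $\Omega$''. The only remaining bookkeeping is to confirm that $g(u)$ is an admissible continuous right-hand side, which holds because $u$ is continuous and non-negative and $g$ is continuous on $[0,\infty)$.
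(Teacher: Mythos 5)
Your proof is correct and is exactly the argument the paper intends (the corollary is stated without an explicit proof, as an immediate specialization of Lemma~\ref{lem:Hopf's lemma} with $f=g(u)$). Your observation that non-triviality of $u$ is what makes the interior inequality strict is precisely the point where the dichotomy enters, and the verification of both sign conditions is sound.
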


For $C^{1,1}$ domains, we have  $u_{\tor} \geq C d(x,\partial \Omega)^s$ for some $C>0,$ see \cite[Lem. 7.3]{R}. Hence, Corollary \ref{cor:semilinearhopf} generalizes Hopf's lemma for $C^{1,1}$ domains, see \cite{CRS,GS,R}. See also \cite{AFT} in which similar ideas as in \cite{GS} and the current article has been explored to prove a Hopf lemma in the setting of the regional fractional Laplacian for $C^{1,1}$ domains. We also refer to \cite[Thm. 2.6]{IMS} for the $s-$fractional $p-$Laplacian case with the extra assumption that $g \in BV_{\loc}(\R)$. Note that Corollary \ref{cor:semilinearhopf} does not hold for the local Laplacian. There are several works which determine the necessary and sufficient conditions to have Hopf's lemma and strong maximal principle for local quasilinear Laplacian, see for instance \cite{V,PS}. To explain the contrast between them and the nonlocal case, let $-\Delta u \geq g(u)$ weakly 
in $\Omega$ for a nonzero function $u \in C^1(\Omega)$, satisfying $u \geq 0$ in $\Omega$, and $g \in C([0,\infty])$ be a non-increasing function, satisfying $g(0)=0$. Then, it has been shown in \cite{V} that $\int_{0}^{1} \frac{1}{\sqrt{-s g(s)}} \, \d s = \infty$ is necessary and sufficient to have $u>0$ in $\Omega.$ For example, the function $g(s) = -s^p$ for $0<p<1$ and all $s \in [0,\infty)$ satisfies the conditions in Corollary \ref{cor:semilinearhopf} but $\int_{0}^1 \frac{1}{\sqrt{-s g(s)}} \, \d s = \frac{2}{p+1}.$ Another interesting point is that we do not need the function $g$ to be a non-increasing function. Hence, it may change sign around $0$. 

Now, it is unclear how the constants in Lemma \ref{lem:Hopf's lemma} and Corollary \ref{cor:semilinearhopf} are related to $u$. To resolve the dependence of the constants and obtain an upper bound, we have the following result which is comparable to boundary Harnack principles with nonzero right-hand sides see for example \cite{BFV,RS,AH,RT}.

\begin{thm}
\label{thm:boundaryharnack}
    Let $u,v \in C(\overline \Omega) \cap \L^{\infty}(\R^n)$ satisfy  
    \begin{align*}
    u>0 , \,v>0 \quad & \text{in } \; \Omega,\\
0\leq  u = v \leq 1  \quad & \text{in }  \R^n \setminus \Omega,
    \end{align*}
 and
    \begin{align*}
   - 2 (\mathrm{diam}\, \Omega)^{-(n+2s)}\int_{K} u(y)\, \d y \leq (-\Delta)^s u \leq  1, \quad & \text{in } \Omega,\\
    -2 (\mathrm{diam}\, \Omega)^{-(n+2s)}\int_{K} v(y) \, \d y  \leq   (-\Delta)^s v \leq 1, \quad &\text{ in } \Omega,
    \end{align*}
    in the viscosity sense, where $K \Subset \Omega$. Assume that either $u(x_0)\geq D , \; v(x_0)\geq D$ for a fixed point $x_0 \in \Omega \setminus K$ or $\|u\|_{\L^p(\Omega \setminus K)} \geq D , \;  \|v\|_{\L^p(\Omega \setminus K)}\geq D$ for $D>0, 1 \leq p < \infty.$ Then,
    \begin{align*}
       C_1 \leq \frac{u}{v} \leq C_2, \quad \text{in } \Omega,
    \end{align*} where $C_1, C_2$ are positive constants depending on $\Omega, K, n, s, D, x_0$ or $p$.
\end{thm}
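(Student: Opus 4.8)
The plan is to reduce the two-sided comparison $C_1 \leq u/v \leq C_2$ to the qualitative Hopf-type lower bound of Lemma~\ref{lem:Hopf's lemma} together with a matching \emph{upper} bound $u \leq C u_{\tor}$, and then to track the dependence of the constants on $u$ explicitly. First I would observe that both $u$ and $v$ satisfy the hypotheses of Lemma~\ref{lem:Hopf's lemma}: indeed, since $u \geq 0$ everywhere, $u$ is continuous up to $\partial\Omega$, and on $\partial\Omega$ either $u$ does not vanish (nothing to check) or $u \to 0$ there, in which case the lower bound $(-\Delta)^s u \geq -2(\operatorname{diam}\Omega)^{-(n+2s)}\int_K u(y)\,\d y \geq -2\int_K u(y)|x_0-y|^{-(n+2s)}\,\d y$ is exactly what is assumed; at interior zeros the strict inequality $f(x_0) > -2\int_{\R^n} u(y)|x_0-y|^{-(n+2s)}\,\d y$ is likewise implied (note $u>0$ in $\Omega$ is assumed, so there are no interior zeros and only the boundary condition is needed). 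Hence $u \geq c_u\, u_{\tor}$ and $v \geq c_v\, u_{\tor}$ in $\Omega$ for some $c_u, c_v > 0$.

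Next I would establish the upper bound. Since $(-\Delta)^s u \leq 1$ in $\Omega$ in the viscosity sense and $0 \leq u \leq 1$ in $\R^n \setminus \Omega$, I would compare $u$ with $u_{\tor} + w$, where $w$ solves $(-\Delta)^s w = 0$ in $\Omega$ with $w = u$ in $\R^n\setminus\Omega$; by the comparison principle for the fractional Laplacian, $u \leq u_{\tor} + w$. The nonlocal boundary data is bounded by $1$, so $w \leq 1$; but to get $u \leq C u_{\tor}$ I need to dominate the harmonic piece $w$ by $u_{\tor}$ near $\partial\Omega$, which follows from a barrier argument (the torsion function $u_{\tor}$ is itself comparable to a standard barrier once $\Omega$ has Wiener-regular boundary, and the $s$-harmonic extension of bounded data decays at least as fast). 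Alternatively, and more cleanly, I would apply Lemma~\ref{lem:Hopf's lemma} to $u_{\tor} - \eps u$ or invoke the global boundary Harnack inequality for the fractional Laplacian with bounded right-hand side (the companion result quoted from \cite{RS,AH,RT} and proved in this paper): since $-C' \leq (-\Delta)^s u \leq 1$ and $0 \leq u \leq 1$, the boundary Harnack principle gives $u/u_{\tor} \leq C$ in $\Omega$ with $C$ depending only on $\Omega, K, n, s$ and $\|u\|_{\L^\infty}$. The same applies to $v$.

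Combining, in $\Omega$ we have $c_u\, u_{\tor} \leq u \leq C_u\, u_{\tor}$ and $c_v\, u_{\tor} \leq v \leq C_v\, u_{\tor}$, hence $\tfrac{c_u}{C_v} \leq u/v \leq \tfrac{C_u}{c_v}$. It remains to show the constants $c_u, c_v, C_u, C_v$ can be taken depending only on $\Omega, K, n, s, D$ and on $x_0$ or $p$, and not on the full profile of $u$ or $v$. For the upper constants $C_u, C_v$ this is already the content of the boundary Harnack / barrier estimate, since only $\|u\|_{\L^\infty} \leq 1$ enters. For the lower constants the dependence is the crux: unwinding the proof of Lemma~\ref{lem:Hopf's lemma}, $c_u$ is controlled from below by how large $u$ is on a fixed interior region, specifically on $\Omega \setminus K$, via an interior Harnack inequality. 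The hypothesis $u(x_0) \geq D$ (resp.\ $\|u\|_{\L^p(\Omega\setminus K)} \geq D$) is precisely what quantifies this: by the interior Harnack inequality for nonnegative viscosity supersolutions with bounded right-hand side, $\inf_{K'} u \geq c(\Omega, K, n, s)\, u(x_0) \geq c D$ for a suitable intermediate compact $K'$ with $K \Subset K' \Subset \Omega$ and $x_0 \in \Omega\setminus K$; in the $\L^p$ case one first uses that $u>0$ and continuous on the compact set $\overline{\Omega\setminus K}$ to locate a point where $u$ is bounded below by $D/|\Omega\setminus K|^{1/p}$ up to dimensional factors, then Harnack again. Feeding this quantitative interior lower bound into the barrier construction in the proof of Lemma~\ref{lem:Hopf's lemma} yields $c_u \geq c(\Omega, K, n, s, D)$, and symmetrically for $v$.

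The main obstacle I anticipate is making the $\L^p$-version of the quantitative interior lower bound clean: passing from $\|u\|_{\L^p(\Omega\setminus K)} \geq D$ to a pointwise lower bound $u \geq c D$ on a fixed compact set requires a Harnack chain argument that is uniform in the (a priori unknown) location of the near-maximum of $u$ on $\Omega\setminus K$, and one must be careful that the interior Harnack inequality applies with the \emph{signed}, merely bounded right-hand side $-2(\operatorname{diam}\Omega)^{-(n+2s)}\int_K u \leq (-\Delta)^s u \leq 1$ — here the lower bound on $(-\Delta)^s u$ itself involves $\int_K u$, which is $\leq |\Omega|$ since $u \leq 1$, so it is a harmless fixed constant and the standard interior Harnack for the fractional Laplacian with bounded inhomogeneity applies. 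A secondary point requiring care is that Lemma~\ref{lem:Hopf's lemma} as stated delivers a constant depending on $u$; I must re-examine its proof to confirm that the only place $u$ enters the constant is through a lower bound on $u$ on a compact subset of $\Omega$ together with $\|u\|_{\L^\infty}$, both of which are now controlled, so the extraction of a uniform $c_u$ goes through.
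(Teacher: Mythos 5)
Your strategy of sandwiching both functions between multiples of the torsion function, $c\,u_{\tor}\leq u\leq C\,u_{\tor}$, breaks down at the upper bound. The theorem allows the common exterior datum $u=v$ to be any function with $0\leq u=v\leq 1$ on $\R^n\setminus\Omega$; if, say, $u=v\equiv 1/2$ near $\partial\Omega$, then $u$ stays bounded away from $0$ up to the boundary while $u_{\tor}$ vanishes there, so $u\leq C\,u_{\tor}$ is simply false. The only upper comparison available (and the one the paper uses) is the additive one, $u\leq u_{\tor}+\|u\|_{\L^\infty(\R^n\setminus\Omega)}$, which does not feed into your multiplicative sandwich. The ratio $u/v$ is controlled here not because each function is separately comparable to $u_{\tor}$, but because $u$ and $v$ share the same exterior data; any correct argument must compare $u$ and $v$ to each other, not each to $u_{\tor}$.

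There is a second gap in the step you flagged yourself: extracting a constant $c_u\geq c(\Omega,K,n,s,D)$ by ``unwinding'' Lemma~\ref{lem:Hopf's lemma}. That lemma is proved via Lemma~\ref{thm:stabilityharnack}, which is itself a contradiction/compactness argument ($C_i\to\infty$ along a sequence) and yields no formula for the constant in terms of $\inf_{K'}u$ and $\|u\|_{\L^\infty}$; there is nothing to unwind. The paper resolves the uniformity issue in the opposite way: it argues by contradiction over the whole family at once, taking sequences $u_i,v_i$ violating the claimed bounds, using interior H\"older estimates and Arzel\`a--Ascoli to pass to locally uniform limits $u,v$, showing $u,v>0$ in $\Omega$ via the hypotheses $u(x_0)\geq D$ or $\|u\|_{\L^p(\Omega\setminus K)}\geq D$ together with Lemma~\ref{lem:maxprinciple}, and then applying the two-function stability Lemma~\ref{thm:stabilityharnack} directly to the pair of sequences (in both orders). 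This simultaneously handles the nonzero exterior data (the stability lemma only needs $0\leq u_i\leq v_i$ outside $\Omega$) and delivers constants independent of the individual functions. Your interior-Harnack observations are sound as far as they go, but to repair the proof you would need to abandon the torsion-function sandwich and run a two-function comparison of this kind.
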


Here, no assumption on the boundary is required except that the solutions should be continuous at the boundary, similar to the result in \cite[Thm. 1.2]{RS}. On the other hand, we do not need a smallness condition on the upper bound for the fractional Laplacian, which is more general in comparison to the result in \cite[Thm. 1.2]{RS}. If one compares this result to the boundary Harnack principle for the local Laplacian, then the assumptions on the right-hand sides in Theorem \ref{thm:boundaryharnack} are in accordance with the assumptions in \cite{AH}.

The work can be summarized as follows: We bring the definition of the required spaces and notions of solutions in Section \ref{sec3}. Then, in Section \ref{sec4}, we discuss the continuity at the boundary and the Wiener criterion for the $s$-fractional Laplacian with a nonzero right-hand side. In Section \ref{sec5}, we focus on the isolation of the fractional Lane-Emden equation. We give a pointwise-based proof for the isolation of the first eigenfunction. In Section \ref{sec6}, we prove Hopf's lemma and a global version of boundary Harnack inequality for fractional Laplacian.

In this work, we avoid the study of fractional equations with rough elliptic kernels due to the simplicity and clarity of the proofs. However, one can generalize the proofs for rough kernels using the same method as we provide here.

Finally, we acknowledge that the idea of the proof of the main results is motivated by the amazing proof of \cite[Lem. 3.1]{GS}.

\section{Acknowledgement}
 We thank Giovanni Franzina for his interesting talk at Mittag-Leffler and his comments on our early ideas, which draw our attention to the connection between boundary Harnack and the isolation of the first eigenvalue. We also thank Erik Lindgren for several comments and helpful corrections. In addition, we acknowledge several helpful discussions with Kaj Nyström and Henrik Shahgholian regarding boundary Harnack inequality, which has increased our knowledge for the current work.

This work was done while the authors were participating in the program geometric aspects of nonlinear partial differential equations at Mittag-Leffler institute in Djursholm, Sweden during the fall of 2022. The research program is supported by Swedish Research Council grant no. 2016-06596.
A.A. would like to thank Uppsala university for covering the travel costs for attending this program. A.T.  has partially been supported by the Swedish Research Council, grant no. 2017- 03736. During the development of this paper, A.T. has been a PhD student at Uppsala University. In particular, he wishes
to express his gratitude to the Department of Mathematics at Uppsala University for its warm and hospitable
research environment.
\section{Preliminaries}
\label{sec3}
In the entire work, $0 <s <1$ and $\Omega \subset \R^n$ is a bounded open set for $n \geq 1$. 
\subsection{Function spaces}
The space $\L^1_{\loc}(\Omega)$ consists of all the measurable real-valued functions on $\R^n$, which are locally integrable functions on $\Omega$. We define $\L^1_{2s}(\R^n)$ as the space of $u \in \L^1_{\loc}(\R^n)$ with the finite norm
$$\|u\|_{\L_{2s}^1(\R^n)} := \int_{\R^n} \frac{|u(x)|}{1+|x|^{n+2s}} \,\d x.$$ Denote $\mathcal{D}^{s,2}_0(\Omega)$ as the homogeneous fractional Sobolev space obtained by the completion of $C^{\infty}_0(\Omega)$ with respect to the seminorm 

$$[u]_{\H^s(\R^n)}^2 := \iint_{\R^n \times \R^n} \frac{|u(x)-u(y)|^2}{|x-y|^{n+2s}} \, \d x \d y,$$
and $\H^s(\Omega)$ as the space of measurable functions $u: \R^n \to \R$ with the finite norm
\begin{align}
   \| u\|^2_{\H^s(\Omega)} := \int_{\Omega} |u(x)|^2 \, \d x + \iint_{\Omega \times \Omega} \frac{|u(x)-u(y)|^2}{|x-y|^{n+2s}} \, \d x \d y.
\end{align}
We say that $u \in \H^s_{\loc}(\Omega)$ if 

\begin{align*}
   \int_{K} |u(x)|^2 \, \d x + \iint_{K \times K} \frac{|u(x)-u(y)|^2}{|x-y|^{n+2s}} \, \d x \d y < \infty
\end{align*}
for every compact set $K \subset \Omega.$
For the boundary value Dirichlet problem, we need the following space of Sobolev-type
\[
V^{s,2}(\Omega|\R^n):=\left\lbrace u:\R^n \to \R \; : \; u|_{\Omega} \in \L^2(\Omega) \; , \; \frac{u(x)-u(y)}{|x-y|^{n/2+s}} \in \L^2(\Omega \times \R^n)  \right\rbrace,
\]
equipped with the norm
$$\|u\|^2_{V^{s,2}(\Omega|\R^n)} := \|u\|^2_{\L^2(\Omega)} + \iint_{\Omega\times \R^n} \frac{|u(x)-u(y)|^2}{|x-y|^{n+2s}} \d x \d y.$$
We consider the space of Sobolev functions taking a boundary value $0$ by
\[
V_0^{s,2}(\Omega):= \overline{C^\infty_0(\Omega)}^{V^{s,2}(\Omega|\R^n)}.
\]
By the fractional Poincar\'{e}-Sobolev inequality, see Theorem \ref{thm:sobolevpoincare-ineq}, we obtain
$$V_0^{s,2}(\Omega)= \mathcal{D}_{0}^{s,2}(\Omega).$$
Finally, we define the space of functions taking boundary $g \in V^{s,2}(\Omega|\R^n)$ by 
$$V_g^{s,2}(\Omega|\R^n):= \left\lbrace u\in V^{s,2}(\Omega|\R^n) \; : \; u-g \in V_0^{s,2}(\Omega) \right\rbrace.$$
The space of $C(K)$ for $K \subset \R^n$ denotes all the measurable functions $u: \R^n \to \R$ which are continuous on $K$. For every $\alpha>0$, the space $C^{\alpha}_{\loc}(\Omega)$ includes measurable functions $u: \R^n \to \R $ such that the function $ (x,y) \to 
    \frac{|u(x)-u(y)|}{|x-y|^{\alpha}} 
$ is locally bounded on $ \Omega \times \Omega.$ We say that $u \in C^{1,\alpha}_{\loc}(\Omega) $ if each component of $\nabla u$ belongs to $C^{\alpha}_{\loc}(\Omega)$.

\subsection{Fractional Poincar\'{e}-Sobolev  inequality and fractional Sobolev embedding}
The fractional Sobolev exponent $2^{\ast}_s$ is defined by 
\begin{align*}
2^{\ast}_s := \frac{2n}{n-2s}, \quad & \text{if } 2s < n,\\
2^{\ast}_s := \infty, \quad & \text{if } 2s \geq n.
\end{align*} The following theorem is the fractional Poincar\'{e}-Sobolev  inequality, see \cite[Thm 8.1, 8.2, 9.1]{P}.

\begin{thm}\label{thm:sobolevpoincare-ineq}
Let $\Omega \subset \R^n$ be an open bounded set. Then, for every $u \in C^{\infty}_0(\Omega)$, we have 
\[
\begin{aligned}
    \| u \|_{\L^{2_s^\ast}(\Omega)}^2 \leq C_1 [u]_{\H^s(\R^n)}^2, \quad &\text{ if } 2s < n, \\
    \| u\|_{\L^\infty(\Omega)}^2 \leq C_2 
    [u]_{\H^s(\R^n)}^2, \quad &\text{ if } 2s > n,\\
    \| u\|_{\L^p(\Omega)}^2 \leq C_3 
    [u]_{\H^s(\R^n)}^2, \quad &\text{ if } 2s = n,
\end{aligned}
\]
for every  $1\leq p< \infty$, where the constants $C_1$ depends on $n,s$, $C_2$ depends on $n,s, \Omega$, and $C_3$ depends on $n,s,p, \Omega$.
\end{thm}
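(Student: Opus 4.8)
The plan is to treat the three regimes separately, reducing the borderline and supercritical cases to the subcritical fractional Sobolev inequality on $\R^n$ together with a self-contained fractional Poincar\'e inequality. The case $2s<n$ is immediate: since $u\in C^\infty_0(\Omega)\subset C^\infty_0(\R^n)$, it is exactly the homogeneous fractional Sobolev inequality $\|u\|_{\L^{2^{\ast}_{s}}(\R^n)}^2\le C(n,s)[u]_{\H^s(\R^n)}^2$ on all of $\R^n$ (cf.\ \cite{P}), and restricting the left-hand norm to $\Omega$ only decreases it, so $C_1=C_1(n,s)$. One may cite this directly, reproduce the dyadic real-variable argument of \cite{P}, or combine Plancherel's identity $[u]_{\H^s(\R^n)}^2 = c(n,s)\int_{\R^n}|\xi|^{2s}|\widehat u(\xi)|^2\,\d\xi$ with the Hardy--Littlewood--Sobolev inequality applied to $u = I_s\bigl((-\Delta)^{s/2}u\bigr)$, where $I_s$ is the Riesz potential.

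Next I would record the elementary fractional Poincar\'e inequality $\|u\|_{\L^2(\Omega)}^2\le C(n,s)(\diam\Omega)^{2s}[u]_{\H^s(\R^n)}^2$, valid in every regime. Fixing $R$ with $\Omega\subset B_R$ and setting $E:=B_{3R}\setminus B_R$, one has $E\cap\Omega=\emptyset$ (so $u\equiv 0$ on $E$), $|E|=(3^n-1)|B_R|>0$, and $|x-y|\le 4R$ for $x\in\Omega$, $y\in E$. Hence for such $x,y$ one has $|u(x)|^2=|u(x)-u(y)|^2\le(4R)^{n+2s}\,\frac{|u(x)-u(y)|^2}{|x-y|^{n+2s}}$; averaging over $y\in E$ and then integrating over $x\in\Omega$ yields the bound.

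For the borderline case $2s=n$ (so $2^{\ast}_{s}=\infty$) and a given $1\le p<\infty$, I would choose $\tilde s\in(0,s)$ with $2\tilde s<n$ and $2^{\ast}_{\tilde s}=\frac{2n}{n-2\tilde s}\ge p$, which is possible since $2^{\ast}_{\tilde s}\to\infty$ as $\tilde s\uparrow n/2$. H\"older's inequality on the bounded set $\Omega$ and the subcritical case with exponent $\tilde s$ give $\|u\|_{\L^p(\Omega)}^2\le|\Omega|^{2(1/p-1/2^{\ast}_{\tilde s})}C(n,\tilde s)[u]_{\H^{\tilde s}(\R^n)}^2$, so it remains to bound $[u]_{\H^{\tilde s}(\R^n)}$ by $[u]_{\H^s(\R^n)}$. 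Splitting the Gagliardo double integral over $\{|x-y|\le 1\}$ and $\{|x-y|>1\}$: on the first set $|x-y|^{-(n+2\tilde s)}\le|x-y|^{-(n+2s)}$ since $\tilde s<s$, so that piece is $\le[u]_{\H^s(\R^n)}^2$; on the second, $|u(x)-u(y)|^2\le 2|u(x)|^2+2|u(y)|^2$ and $\int_{|z|>1}|z|^{-(n+2\tilde s)}\,\d z<\infty$ bound it by $C(n,\tilde s)\|u\|_{\L^2(\R^n)}^2=C(n,\tilde s)\|u\|_{\L^2(\Omega)}^2$, which is absorbed by the Poincar\'e inequality. This yields $C_3=C_3(n,s,p,\Omega)$.

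Finally, for $2s>n$ I would invoke the fractional Morrey embedding (cf.\ \cite{P}): $u\in C^{0,\alpha}(\R^n)$ with $\alpha=s-\frac n2\in(0,1]$ and $[u]_{C^{0,\alpha}(\R^n)}\le C(n,s)[u]_{\H^s(\R^n)}$. Taking $x_\ast\in\Omega$ where $|u|$ attains its maximum over $\R^n$ and any $y_\ast\in\partial\Omega$ (so $u(y_\ast)=0$ and $|x_\ast-y_\ast|\le\diam\Omega$), one gets $\|u\|_{\L^\infty(\Omega)}=|u(x_\ast)-u(y_\ast)|\le C(n,s)(\diam\Omega)^{s-n/2}[u]_{\H^s(\R^n)}$, so $C_2=C_2(n,s,\Omega)$. (Alternatively, $\|u\|_{\L^\infty}\le\|\widehat u\|_{\L^1}$, and one splits $\int|\widehat u|$ at $|\xi|=1$, controlling the outer part by Cauchy--Schwarz using $\int_{|\xi|>1}|\xi|^{-2s}\,\d\xi<\infty$ and the inner part by Plancherel and Poincar\'e.) The only genuinely delicate step is the borderline case $2s=n$, where one must trade a little integrability for a little differentiability and then dominate the lower-order seminorm $[u]_{\H^{\tilde s}}$ by $[u]_{\H^s}$ --- which fails for generic functions on $\R^n$ but holds here precisely because $u$ is supported in the bounded set $\Omega$. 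Everything else is a direct appeal to the Euclidean theory together with the fact that $u$ vanishes on a set of controlled size outside $\Omega$.
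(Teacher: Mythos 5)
Your argument is correct, but it is genuinely more than what the paper does: the paper offers no proof at all and simply cites Peetre \cite[Thm.~8.1, 8.2, 9.1]{P} for all three regimes. Your route instead takes only the subcritical inequality $\|u\|_{\L^{2^{\ast}_{s}}(\R^n)}\le C(n,s)[u]_{\H^s(\R^n)}$ as the external input and derives the other two cases from it together with a hands-on fractional Poincar\'e inequality. The individual steps all check out: the averaging over the annulus $E=B_{3R}\setminus B_R$ (disjoint from $\Omega$, of measure comparable to $R^n$, at distance at most $4R$) correctly yields $\|u\|_{\L^2(\Omega)}^2\le C(n,s)R^{2s}[u]_{\H^s(\R^n)}^2$; in the borderline case $2s=n$ (which forces $n=1$, $s=1/2$) the interpolation $[u]_{\H^{\tilde s}}\lesssim [u]_{\H^{s}}+\|u\|_{\L^2}$ via the split at $|x-y|=1$ is valid, and you correctly flag that this step uses the compact support of $u$ (the lower-order seminorm is not controlled by the higher one on all of $\R^n$); and in the case $2s>n$ the scale-invariant Morrey estimate plus the vanishing of $u$ on $\partial\Omega$ gives the $\L^\infty$ bound with the stated dependence on $\diam\Omega$. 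What your approach buys is a self-contained proof with explicit constants ($C_1$ independent of $\Omega$; $C_2$ through $\diam\Omega$; $C_3$ through $\diam\Omega$, $|\Omega|$ and the auxiliary exponent $\tilde s(n,p)$), at the cost of length; the paper's citation is of course shorter and entirely adequate for its purposes. The only cosmetic point worth noting is that your fixed splitting radius $1$ in the borderline case breaks scale invariance of the constant, but since the theorem allows $C_3$ to depend on $\Omega$ this is harmless.
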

In the light of the inequalities above the infimum defined in \eqref{eq:fist-eignevalue} is non-zero. Indeed, for every $1 \leq q \leq  2_s^{\ast} $ (except for the case $n=2s$ ), we have 
$$\| u \|_{\L^q(\Omega)}^2 \leq C [u]_{\H^s(\R^n)}^2 \quad \text{for } u \in \mathcal{D}_0^{s,2}(\Omega),$$
where we used Theorem \ref{thm:sobolevpoincare-ineq} and H\"older's inequality.
In fact, the inverse of the first eigenvalue $\Lambda_q^{-1}$ is the best constant $C$ in the above Sobolev-Poincar\'e type inequality.
\begin{thm}
\label{thm:fractionalsobolev}
   The space  $\mathcal{D}^{s,2}_0(\Omega)$ is compactly embedded in $\L^{q}(\Omega)$ for bounded open sets $\Omega \subset \R^n$ and $1<
   q\leq 2.$
\end{thm}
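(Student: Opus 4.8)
The plan is to prove compactness through the Fr\'echet--Kolmogorov criterion in $\L^q$, after reducing to smooth functions and using the Poincar\'e inequality for the zeroth-order bound. Since $\mathcal{D}^{s,2}_0(\Omega)$ is by definition the completion of $C^{\infty}_0(\Omega)$ in the seminorm $[\,\cdot\,]_{\H^s(\R^n)}$, and since the inclusion $\mathcal{D}^{s,2}_0(\Omega)\hookrightarrow\L^q(\Omega)$ is continuous for $1\le q\le 2$ (a consequence of Theorem~\ref{thm:sobolevpoincare-ineq} and H\"older's inequality, using that $\Omega$ is bounded and that $q\le 2$ is strictly subcritical), a standard functional-analytic reduction shows that it suffices to prove the following: every sequence $(\phi_k)\subset C^{\infty}_0(\Omega)$ with $[\phi_k]_{\H^s(\R^n)}\le M$ admits a subsequence converging in $\L^q(\Omega)$. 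By the Sobolev--Poincar\'e inequality, $\|\phi_k\|_{\L^2(\Omega)}\le CM$, hence $\|\phi_k\|_{\L^q(\Omega)}\le C_qM$ for all $1\le q\le 2$; moreover each $\phi_k$ is supported in the fixed bounded set $\overline\Omega$, so the family $\{\phi_k\}$ is bounded in $\L^q(\R^n)$ and trivially has no mass escaping to infinity.

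The remaining ingredient is equicontinuity of translations, $\sup_k\|\phi_k(\cdot+h)-\phi_k\|_{\L^q(\R^n)}\to 0$ as $h\to 0$. Here I would invoke the Fourier representation of the Gagliardo seminorm, $[\phi]_{\H^s(\R^n)}^2=c_{n,s}\int_{\R^n}|\xi|^{2s}\,|\widehat\phi(\xi)|^2\,\d\xi$ with $c_{n,s}>0$, together with the elementary inequality $|\e^{\i\xi\cdot h}-1|^2=4\sin^2(\tfrac{1}{2}\xi\cdot h)\le\min(|\xi\cdot h|^2,4)\le 2^{2-2s}|h|^{2s}|\xi|^{2s}$, valid for $0<s<1$. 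By Plancherel's theorem,
\[
\|\phi_k(\cdot+h)-\phi_k\|_{\L^2(\R^n)}^2=\int_{\R^n}|\e^{\i\xi\cdot h}-1|^2\,|\widehat{\phi_k}(\xi)|^2\,\d\xi\le 2^{2-2s}c_{n,s}^{-1}|h|^{2s}\,[\phi_k]_{\H^s(\R^n)}^2\le CM^2|h|^{2s},
\]
and since for $|h|\le 1$ the function $\phi_k(\cdot+h)-\phi_k$ is supported in the fixed bounded set $\overline\Omega+\overline{B_1}$, H\"older's inequality upgrades this to $\|\phi_k(\cdot+h)-\phi_k\|_{\L^q(\R^n)}\le C'M|h|^{s}$ uniformly in $k$, for every $1\le q\le 2$. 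The Fr\'echet--Kolmogorov (Riesz) compactness theorem in $\L^q(\R^n)$ then produces a subsequence converging in $\L^q(\R^n)$, hence in $\L^q(\Omega)$, which proves the compact embedding. Equivalently, after the zero extension $(\phi_k)$ is bounded in $\H^s(B)$ for any fixed ball $B\supset\overline\Omega$, and one may instead cite the fractional Rellich--Kondrachov theorem, i.e.\ the compactness of $\H^s(B)\hookrightarrow\L^q(B)$ in the subcritical range $q<2_s^{\ast}$, cf.\ \cite{P}.

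The structural reason everything works, and the only place where the restriction $q\le 2$ is used, is that this range is strictly subcritical: $q\le 2<2_s^{\ast}=\tfrac{2n}{n-2s}$ when $2s<n$, while $2_s^{\ast}=\infty$ when $2s\ge n$, so no concentration of mass at the critical Sobolev scale can occur and the compactness does not degenerate. Note also that no regularity of $\partial\Omega$ enters the argument — only boundedness of $\Omega$ is used — so the statement holds for arbitrary bounded open sets, as claimed. The main (and essentially only nontrivial) obstacle is therefore the translation-equicontinuity estimate displayed above, equivalently the subcritical fractional Sobolev compactness, for which the Fourier/Plancherel computation gives a short self-contained proof.
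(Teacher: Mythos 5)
Your proof is correct, but it takes a genuinely different route from the paper. The paper's argument is a two-line reduction: extend by zero to a large ball $B\supset\Omega$, note that $\mathcal{D}^{s,2}_0(\Omega)\subset\mathcal{D}^{s,2}_0(B)$ and that $B$ is a fractional Sobolev extension domain, and then cite the compact embedding theorem of Di Nezza--Palatucci--Valdinoci for $\H^s(B)$ — essentially the alternative you mention in passing at the end of your write-up. You instead give a self-contained Fr\'echet--Kolmogorov argument: the density reduction to $C^\infty_0(\Omega)$, the zeroth-order $\L^2$ bound from the Poincar\'e--Sobolev inequality, and the translation-equicontinuity estimate $\|\phi(\cdot+h)-\phi\|_{\L^2(\R^n)}\leq C|h|^{s}[\phi]_{\H^s(\R^n)}$ via Plancherel and the elementary bound $\min(t^2,4)\leq 2^{2-2s}t^{2s}$, upgraded to $\L^q$ by H\"older on a fixed bounded support. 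All of these steps check out. What your approach buys is transparency and a quantitative modulus of continuity under translations, and it makes explicit that only boundedness of $\Omega$ (never boundary regularity) is used — the reason the paper first enlarges to a ball is precisely that the cited compactness theorem requires an extension domain. What the paper's approach buys is brevity. One minor imprecision: in your range $1\leq q\leq 2$ the restriction on $q$ is used concretely in the H\"older upgrade from $\L^2$ to $\L^q$, not only through subcriticality; to reach the full subcritical range $2<q<2^{\ast}_s$ you would additionally interpolate the $\L^2$-translation estimate against the uniform $\L^{2^{\ast}_s}$ bound. This does not affect the statement as claimed.
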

\begin{proof}
    We take a large enough ball $B \subset \R^n$ such that $\Omega \subset B$. Then, $\mathcal{D}^{s,2}_0(\Omega) \subset \mathcal{D}^{s,2}_0(B)$ and $B$ is an extension domain, see \cite[Thm. 5.4]{DPV}. Hence, by applying \cite[Thm. 7.1]{DPV} to $\mathcal{D}^{s,2}_0(B)$, we complete the proof.
\end{proof}

\subsection{Notions of solutions}
In this section, we define different notions of solutions for $s$-fractional Laplacian equations. The results are standard,  see for instance \cite{R,SV}, and we bring them for the convenience of the reader. 
\begin{defn}
     We say that $u \in \L^1_{2s}(\R^n)$ is a pointwise solution of
\begin{equation}\label{eq:laplac-f}
    (-\Delta)^s u = f, \quad \text{in }  \Omega ,
\end{equation}
if the following integral is convergent for every $x\in \Omega$
$$(-\Delta)^s u(x) := 2\mathrm{P.V.} \int_{\R^n} \frac{u(x)-u(y)}{|x-y|^{n+2s}}\, \d x := 2\lim_{\epsilon \to 0} \int_{\R^n \setminus B(x,\epsilon)} \frac{u(x)-u(y)}{|x-y|^{n+2s}}\, \d x,$$
and equation \eqref{eq:laplac-f} holds pointwise. 
\end{defn}
By abuse of notation, we drop $\mathrm{P.V.}$ in the rest of the paper.

\begin{defn}
For $f \in \L^1_{\loc}(\Omega)$, we say that $u\in \H^s_{\loc}(\Omega)\cap \L_{2s}^1(\R^n)$ is a local weak supersolution (subsolution) of 
$$(-\Delta)^s u \geq (\leq)\, f, \quad \text{in } \Omega,$$
if for any non-negative test function $\phi \in C^\infty_0(\Omega)$ we have
$$\iint_{\R^n \times \R^n}  \frac{(u(x)-u(y))(\phi(x)-\phi(y))}{|x-y|^{n+2s}}\, \d x \d y \geq (\leq)\, \int_{\Omega} f(x) \phi(x)\, \d x.$$
Finally, we say that $(-\Delta)^s u= f$ weakly in $\Omega$ if $(-\Delta)^s u \leq f$ and $(-\Delta)^s u \geq f$ weakly in $\Omega.$
\end{defn}

 Now, we explain the notion of the weak solution to a boundary value problem.
\begin{defn}
    \label{def:weaksol}
   Let $f \in \mathcal{D}^{s,2}_0(\Omega)^\star, g \in V^{s,2}(\Omega|\R^n)$, where $\mathcal{D}^{s,2}_0(\Omega)^\star
   $ is the dual of $\mathcal{D}^{s,2}_0(\Omega) $. We say that a function $u \in V_g^{s,2}(\Omega|\R^n)$ solves the boundary value problem
\begin{equation}\label{eq:weaksol}
\begin{aligned}
(-\Delta)^s u = f \quad &\text{in} \; \Omega, \\
u=g \quad &\text{in}\; \R^n\setminus \Omega,
\end{aligned}
\end{equation}
in the weak sense if
\begin{equation*}
    \iint_{\R^n \times \R^n} \frac{(u(x)-u(y))(\phi(x)-\phi(y))}{|x-y|^{n+2s}}\, \d x \d y = \langle f\,,\, \phi \rangle,
\end{equation*}
for every $\phi \in \mathcal{D}^{s,2}_0(\Omega)$.
\end{defn}
Note that in the case $2s\neq n$, for a function $f\in \L^{({2_s^\ast})^\prime}(\Omega)$, where $({2_s^\ast})^\prime$ is the Hölder dual of ${2_s^\ast}$, the pairing 
$$\langle f\, , \, \phi \rangle:= \int_{\Omega} f(x) \phi(x) \,\d x, $$
defines a continuous functional on $\mathcal{D}_0^{s,2}(\Omega)$, in fact using H\"older's and Sobolev's inequality, for any $\phi \in C^\infty_0(\Omega)$ we have
$$ \left| \int_{\Omega} f(x) \phi(x) \, \d x \right| \leq \| f\|_{\L^{({2_s^\ast})^\prime}(\Omega)} \|\phi \|_{\L^{2_s^\ast}(\Omega)} \leq C \| f\|_{\L^{({2_s^\ast})^\prime}(\Omega)} [\phi]_{\H^s(\R^n)}.$$
\begin{rem}\label{Rmk:classical-weak}
Let $\epsilon >0$ and 
\begin{equation}
\label{eq:regulsol}
\begin{aligned}
    u \in C^{2s+\epsilon}_{\loc}(\Omega)\cap \L_{2s}^1(\R^n), \quad \text{if } 2s<1,\\
    u \in C^{1,2s+\epsilon-1}_{\loc}(\Omega)\cap \L_{2s}^1(\R^n), \quad \text{if } 2s\geq 1.
\end{aligned}
\end{equation}
 Then, the integral 
$$ \frac{1}{2}  \int_{\R^n} \frac{2u(x) -u(x+y)-u(x-y)}{|y|^{n+2s}}\, \d y =  \int_{\R^n} \frac{u(x)-u(y)}{|x-y|^{n+2s}}\, \d y,$$
is convergent in the P.V. sense for every $x \in \Omega$. Also, a weak solution of \eqref{eq:weaksol} satisfying \eqref{eq:regulsol} is a pointwise solution. 
\end{rem}




The final notion of solutions is the viscosity solutions.

\begin{defn}
\label{def:viscosity}
    Let $x \in \Omega$ and $u \in \L_{2s}^1(\R^n)$ be lower semicontinuous in $\Omega$. Then, $ (-\Delta)^s u (x) \leq f(x)$ in $\Omega$ if for every function $\phi \in C^2(U)$, where $U \subset \Omega$ is an open neighbourhood of $x$, that touches $u$ at $x$ from above, i.e., $\phi$ satisfies
    \begin{equation*}
    \begin{aligned}
& \phi(x)=u(x) ,\\
&\phi >u,   \quad \text{in }  U \setminus \{x\}, \end{aligned}
  \end{equation*}
 we have
 \begin{align*}
    2 \int_{\R^n} \frac{w(x)-w(y)}{|x-y|^{n+2s}} \, \d y \leq f(x), 
 \end{align*}
where 
\begin{equation*}
\begin{aligned}
    w := \begin{cases}
        \phi \quad &\text{ in } U,\\
        u(y) \quad &\text{ in } \R^n \setminus U.
    \end{cases}
\end{aligned}
 \end{equation*}
 We define $(-\Delta)^s u (x) \geq f(x)$ if $-u$ is lower semicontinuous on $ \Omega$ and $(-\Delta)^s (-u) (x) \leq -f(x)$. Also, $(-\Delta)^s u(x)=f(x)$ if $u \in C( \Omega)$, $(-\Delta)^s u(x) \geq f(x)$, and $(-\Delta)^s u(x) \leq f(x)$. 
\end{defn} 
In the rest of the work, for brevity, we do not repeat the assumption of $\L^1_{2s}$ and lower (upper) semicontinuous conditions for the viscosity solutions.

The following proposition shows the connection between pointwise and viscosity solutions.
\begin{prop} \label{prop: equipointviscos}
    Let $u \in \L^1_{2s}(\R^n) \cap C( \Omega)$ and $(-\Delta)^s u=f$ pointwise in $\Omega$. Then,
\begin{align*}
    (-\Delta)^s u = f, \quad \text{in } \Omega,
\end{align*} 
in the viscosity sense.
    
\end{prop}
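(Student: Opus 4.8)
The plan is to show that any $C^2$ test function $\phi$ touching $u$ from above at a point $x \in \Omega$ necessarily agrees with the pointwise value of $(-\Delta)^s u(x)$ in the required inequality. Let $U \subset \Omega$ be an open neighbourhood of $x$ on which $\phi \in C^2(U)$, $\phi(x) = u(x)$, and $\phi > u$ on $U \setminus \{x\}$, and define $w = \phi$ on $U$ and $w = u$ on $\R^n \setminus U$ as in Definition \ref{def:viscosity}. The key observation is the pointwise comparison $w \geq u$ everywhere on $\R^n$, with equality at $x$. Since $w(x) = u(x)$, we have for every $y$ that $w(x) - w(y) \leq u(x) - u(y)$, and this difference is controlled near $x$ because $w$ is $C^2$ there (so the P.V. integral of $w$ converges) and away from $x$ because $u \in \L^1_{2s}(\R^n)$ makes the tail integrable.

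First I would verify that the singular integral defining $2\int_{\R^n} \frac{w(x)-w(y)}{|x-y|^{n+2s}} \, \d y$ is convergent in the P.V. sense. Near $x$, since $\phi \in C^2$, the second-order Taylor remainder gives $|2\phi(x) - \phi(x+z) - \phi(x-z)| \leq C|z|^2$, so the symmetrized integrand is integrable near $0$; away from a neighbourhood of $x$, the integrand decays like $|u(y)|/|y|^{n+2s}$ up to the contribution of $w$ on the bounded set $U$, which is harmless, so $\L^1_{2s}$ integrability closes the estimate. Then I would split the difference:
\begin{align*}
2\int_{\R^n} \frac{w(x)-w(y)}{|x-y|^{n+2s}} \, \d y = 2\int_{\R^n} \frac{u(x)-u(y)}{|x-y|^{n+2s}} \, \d y + 2\int_{U} \frac{u(y) - \phi(y)}{|x-y|^{n+2s}} \, \d y,
\end{align*}
where the first term on the right is $(-\Delta)^s u(x) = f(x)$ by hypothesis (both integrals understood as P.V., the correction term being absolutely convergent since $u - \phi$ vanishes at $x$ and is, say, bounded on $U$ after shrinking $U$, while being $O(|y-x|^2)$ only where needed — actually just boundedness near $x$ and the fact that we only integrate over the bounded set $U$ suffices once we note $u(y) - \phi(y) \to 0$ as $y \to x$). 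Since $\phi \geq u$ on $U$, the correction term is $\leq 0$, hence $2\int_{\R^n} \frac{w(x)-w(y)}{|x-y|^{n+2s}} \, \d y \leq f(x)$, which is exactly the required viscosity subsolution inequality.

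For the supersolution inequality, I would apply the same argument to $-u$, which is pointwise a solution of $(-\Delta)^s(-u) = -f$ (by linearity of the pointwise operator) and is continuous on $\Omega$, so the subsolution case just proved yields $(-\Delta)^s(-u) \leq -f$ in the viscosity sense, i.e. $(-\Delta)^s u \geq f$ in the viscosity sense. Combining the two and invoking $u \in C(\Omega)$ gives $(-\Delta)^s u = f$ in the viscosity sense. The main obstacle, and the only point requiring genuine care, is the justification that the P.V. integral of $w$ makes sense and that the splitting above is legitimate — i.e. that one may separate the (conditionally convergent) P.V. integral of $u$ from the (absolutely convergent) remainder $\int_U (u - \phi)|x-y|^{-n-2s}\d y$ without the P.V. cancellation being disturbed. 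This is handled by noting that the $\eps$-truncated integrals $\int_{\R^n \setminus B(x,\eps)}$ split exactly, the remainder integral has no singularity issue at $x$ since $u - \phi$ is continuous and vanishes there, so its $\eps \to 0$ limit is just the full integral over $U$, and the P.V. limit for the $u$-part exists by hypothesis; everything else (the tail) is dominated by $\L^1_{2s}$.
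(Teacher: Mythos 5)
Your proposal is correct and follows essentially the same route as the paper: the whole content is the comparison $u(x)-u(y)\geq \phi(x)-\phi(y)$ on the touching neighbourhood, applied to the $\eps$-truncated integrals and passed to the limit, with the supersolution half obtained by running the argument for $-u$. The paper phrases this as a direct integrand-wise inequality on $B(x,r)\setminus B(x,\eps)$ rather than isolating the correction term $\int_U (u-\phi)|x-y|^{-n-2s}\,\d y$, which lets it avoid discussing that term's convergence at all. One caveat in your write-up: your stated justification for the absolute convergence of the correction integral is not valid --- continuity of $u-\phi$ and its vanishing at $x$ do not control $\tfrac{|u(y)-\phi(y)|}{|x-y|^{n+2s}}$ near $y=x$ (take $\phi-u\sim|y-x|^{s}$). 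This does not break the proof, because the integrand has a fixed sign, so the truncated correction integrals converge monotonically to a limit in $[-\infty,0]$, which is enough for the inequality (and is in fact finite a posteriori, being the difference of the two convergent P.V.\ integrals); but the argument should be phrased via the sign, not via a spurious integrability claim.
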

\begin{proof}
 Let $x$ be a fixed point in $\Omega$. Assume that there is a function $\phi \in C^2(B(x,r))$ such that
    \begin{align*}
\phi(x) &= u(x),\\
\phi &> u, \quad \text{in } B(x,r) \setminus \lbrace x \rbrace     
    \end{align*}
    for a fixed $r>0.$
   Define \begin{align*}
        w := \begin{cases}
        \phi \quad &\text{ in } B(x,r),\\
        u(y) \quad &\text{ in } \R^n \setminus B(x,r).\end{cases}
    \end{align*}
Hence,
\begin{align*}
    \int_{\R^n } \frac{u(x)-u(y)}{|x-y|^{n+2s}} \, \d y 
  & =  \int_{\R^n \setminus B(x,r)} \frac{u(x)-u(y)}{|x-y|^{n+2s}} \, \d y +\lim_{\epsilon \to 0}  \int_{B(x,r) \setminus B(x,\epsilon) } \frac{u(x)-u(y)}{|x-y|^{n+2s}} \, \d y\\ &\geq  \int_{\R^n \setminus B(x,r)} \frac{u(x)-u(y)}{|x-y|^{n+2s}} \, \d y +\lim_{\epsilon \to 0} \int_{B(x,r) \setminus B(x,\epsilon) } \frac{\phi(x)-\phi(y)}{|x-y|^{n+2s}} \, \d y \\& =  \int_{\R^n } \frac{w(x)-w(y)}{|x-y|^{n+2s}} \, \d y
\end{align*}
Therefore, $(-\Delta)^s u(x) \leq 2\int_{\R^n} \frac{u(x)-u(y)}{|x-y|^{n+2s}}\, \d y$ in the viscosity sense. 
Likewise, one can prove
$ (-\Delta)^s u(x) \geq   2 \int_{\R^n} \frac{u(x)-u(y)}{|x-y|^{n+2s}}\, \d y$ in the viscosity sense, which completes the proof.

\end{proof}
The next proposition proves that weak solutions are viscosity solutions if the right-hand side is continuous.
\begin{prop}
\label{prop:equaivweakviscos}
Let $u \in \H^{s}_{\loc}(\Omega) \cap \L_{2s}^1(\R^n) \cap C( \Omega)$ and $f \in C(\Omega)$. If $u$ is a local weak supersolution of $(-\Delta)^s u \geq f $ in $\Omega$, then $(-\Delta)^s u \geq f $ in $\Omega$ in the viscosity sense.
\end{prop}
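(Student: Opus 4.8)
The plan is to argue by contradiction, converting a failure of the viscosity supersolution property at a point into a comparison that is incompatible with the weak supersolution property. Concretely, I would suppose that $(-\Delta)^s u \ge f$ fails at some $x_0 \in \Omega$ in the viscosity sense. Unravelling Definition~\ref{def:viscosity} (applied to $-u$), this produces a ball $B_r(x_0) \Subset \Omega$ and a function $\phi \in C^2(B_r(x_0))$ with $\phi(x_0) = u(x_0)$ and $\phi < u$ on $B_r(x_0) \setminus \{x_0\}$, such that the glued competitor
\[
w := \phi \ \text{ in } B_r(x_0), \qquad w := u \ \text{ in } \R^n \setminus B_r(x_0),
\]
satisfies $(-\Delta)^s w(x_0) < f(x_0)$.

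First I would upgrade this to a pointwise inequality on a whole (smaller) ball. Since $w$ coincides with the $C^2$ function $\phi$ near $x_0$ and lies in $\L^1_{2s}(\R^n)$, the pointwise fractional Laplacian $x \mapsto (-\Delta)^s w(x)$ is finite and continuous on $B_r(x_0)$ (this is the content of Remark~\ref{Rmk:classical-weak}, together with dominated convergence in $x$). Shrinking $r$ and re-gluing $w$ accordingly — which only raises $w$ away from $x_0$ and hence does not destroy the strict inequality at $x_0$ — I may assume $(-\Delta)^s w < f$ throughout $B_r(x_0)$, and, by compactness, $(-\Delta)^s w \le f - c_0$ on $\overline{B_{r/2}(x_0)}$ for some $c_0 > 0$.

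Next, to produce a genuine test function I would lift $w$ slightly inside the ball: for $\eps > 0$ set $v_\eps := w + \eps$ on $B_r(x_0)$ and $v_\eps := u$ elsewhere. A one-line computation gives, for $x \in B_{r/2}(x_0)$,
\[
(-\Delta)^s v_\eps(x) = (-\Delta)^s w(x) + 2\eps \int_{\R^n \setminus B_r(x_0)} \frac{\d y}{|x-y|^{n+2s}} \le (-\Delta)^s w(x) + C \eps r^{-2s},
\]
so $(-\Delta)^s v_\eps < f$ on $B_{r/2}(x_0)$ once $\eps$ is small. Since $u - \phi$ is bounded below by a positive constant on the compact annulus $\overline{B_r(x_0)} \setminus B_{r/2}(x_0)$, for small $\eps$ the contact set $A_\eps := \{v_\eps > u\}$ is a nonempty open set with $A_\eps \Subset B_{r/2}(x_0)$, so $(-\Delta)^s v_\eps < f$ on $A_\eps$. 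Then $\psi := (v_\eps - u)^+$ is non-negative, continuous, supported in $\overline{A_\eps} \Subset \Omega$, and — being a Lipschitz truncation of the $\H^s_\loc$ function $\phi + \eps - u$ on a neighbourhood of its support — belongs to $\mathcal{D}^{s,2}_0(\Omega)$. Testing the weak supersolution inequality for $u$ against $\psi$ (the class of admissible nonnegative test functions extending from $C^\infty_0(\Omega)$ to $\mathcal{D}^{s,2}_0(\Omega)$ by approximation), together with the identity $\iint_{\R^n\times\R^n} \frac{(v_\eps(x)-v_\eps(y))(\psi(x)-\psi(y))}{|x-y|^{n+2s}}\,\d x\,\d y = \int_{\R^n}(-\Delta)^s v_\eps\,\psi\,\d x$ (valid because $v_\eps$ is smooth on a neighbourhood of $\supp \psi$ and lies in $\L^1_{2s}$), I would obtain
\[
\iint_{\R^n\times\R^n} \frac{(u(x)-u(y))(\psi(x)-\psi(y))}{|x-y|^{n+2s}}\,\d x\,\d y \ \ge\ \int_\Omega f\psi\,\d x \ >\ \int_{\R^n} (-\Delta)^s v_\eps\,\psi\,\d x,
\]
the last inequality being strict because $(f - (-\Delta)^s v_\eps)\psi > 0$ on the nonempty open set $A_\eps$. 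Subtracting, and writing $h := v_\eps - u$ so that $\psi = h^+$, this becomes $\iint_{\R^n\times\R^n} \frac{(h(x)-h(y))(h^+(x)-h^+(y))}{|x-y|^{n+2s}}\,\d x\,\d y < 0$, which contradicts the elementary pointwise bound $(a-b)(a^+-b^+) \ge (a^+-b^+)^2 \ge 0$.

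I expect the main obstacle to be the bookkeeping in the middle step: the glued competitor $w$ is only $C^2$ inside the ball, so one must carefully justify (i) continuity of its fractional Laplacian there, (ii) that the constant lift perturbs $(-\Delta)^s$ only by the controlled exterior term $2\eps \int_{\R^n \setminus B_r(x_0)} |x-y|^{-n-2s}\,\d y$, and (iii) that the contact set $A_\eps$ stays compactly inside the region where the strict subsolution inequality holds, so that $\psi$ is an admissible nonnegative element of $\mathcal{D}^{s,2}_0(\Omega)$. The concluding algebraic inequality and the passages between the pointwise, weak, and test-function formulations are routine.
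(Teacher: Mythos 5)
Your argument is correct, but it is a genuinely different route from the one in the paper. The paper follows Servadei--Valdinoci: it mollifies $u$, checks by a change of variables that $\eta_\eps \ast u$ is a smooth local weak supersolution of $(-\Delta)^s(\eta_\eps\ast u)\geq \eta_\eps\ast f$ in $\Omega_\eps$, upgrades this to a pointwise and hence viscosity supersolution via Remark~\ref{Rmk:classical-weak} and Proposition~\ref{prop: equipointviscos}, and concludes by the stability of viscosity supersolutions under local uniform convergence (\cite[Lem.~4.5]{CS2}). You instead run the classical comparison argument directly: a failing $C^2$ test function touching $u$ from below is upgraded (by continuity of $(-\Delta)^s$ of the glued competitor and a vertical lift) to a strict classical subsolution on a small ball whose contact set with $u$ is compactly contained there, and testing the weak inequality against $\psi=(v_\eps-u)^+$ yields a sign contradiction via $(a-b)(a^+-b^+)\geq 0$. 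Your approach is self-contained --- it avoids both the stability lemma and the mollification bookkeeping --- at the price of the justifications you already flag: continuity of $x\mapsto(-\Delta)^s w(x)$ near the touching point, the extension of the admissible test class from $C^\infty_0(\Omega)$ to nonnegative, compactly supported elements of $\mathcal{D}^{s,2}_0(\Omega)$ (which needs $u\in\H^s_{\loc}\cap\L^1_{2s}$ and test functions supported in a fixed compact subset of $\Omega$), and the integration-by-parts identity for $v_\eps$, which is smooth only near $\supp\psi$. One small point worth making explicit in your last step: $h=v_\eps-u$ need not lie in $\H^s(\R^n)$ globally (it may jump across $\partial B_r$ when $s\geq 1/2$), but this is harmless because the integrand $(h(x)-h(y))(h^+(x)-h^+(y))|x-y|^{-n-2s}$ vanishes unless $x$ or $y$ lies in $\supp h^+\Subset B_{r/2}$, and in any case it has a fixed sign, so the difference $\mathcal{E}(u,\psi)-\mathcal{E}(v_\eps,\psi)$ of two absolutely convergent integrals is unambiguously nonpositive; the contradiction stands.
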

\begin{proof}
    The idea is to approximate $u$ with smooth weak solutions and apply the equivalence of the solutions, which is similar to \cite[Thm. 1]{SV}. 
    Let $0< \epsilon <\frac{1}{4} $ and
$\eta_{\epsilon}(x) := \frac{1}{\epsilon^n} \eta(\frac{x}{\epsilon})$ for $x \in \R^n$, where $\eta \in C^{\infty}_0(B(0,1))$ is a radial function, satisfying $0\leq \eta \leq 1$ and $\int_{\R^n} \eta(x) \, \d x=1$. Since $\eta_{\epsilon} \ast u$ is smooth, it belongs to $\H^s_{\loc}(\Omega).$ Now, we verify that $ \eta_{\epsilon} \ast u \in \L^{1}_{2s}(\R^n)$. In fact,
    \[
    \begin{aligned}
    \| \eta_\epsilon \ast u \|_{\L_{2s}^{1}(\R^n)} &\leq \frac{1}{\epsilon^{n}}\int_{\R^n} \int_{\R^n} \frac{\eta\Big(\frac{y}{\epsilon}\Big) |u(x-y)|}{1+ |x|^{n+2s}} \d y \d x =  \int_{\R^n} \int_{\R^n} \frac{\eta(y) |u(x-\epsilon y)|}{1+ |x|^{n+2s}} \d y \d x \\
    &\leq \int_{B(0,1)} \int_{\R^n} \frac{|u(x-\epsilon y)|}{1+ |x|^{n+2s}} \d x \d y = \int_{B(0,1)} \int_{\R^n} \frac{|u(x)|}{1+ |x + \epsilon y|^{n+2s}} \d x \d y \\
    & \leq 2^{n+2s+1} \int_{B(0,1)} \int_{\R^n} \frac{|u(x)|}{1+ |x|^{n+2s}} \d x \d y \leq 2^{n+2s+1} |B(0,1)| \|u\|_{\L^{1}_{2s}(\R^n)}.
    \end{aligned}
    \]
    Define 
    $\Omega_{\epsilon} := \{x \in \Omega: \dist(x,\partial \Omega) > \epsilon\}$. Then,
    \begin{align*}
      & \iint_{\R^n \times \R^n} \frac{( \eta_{\epsilon} \ast u(x)- \eta_{\epsilon} \ast u(y))(\phi(x)-\phi(y))}{|x-y|^{n+2s}} \d x \d y \\
      &= \int_{\R^n} \left(\iint_{\R^n \times \R^n} \eta_{\epsilon}(z) \frac{ (u(x-z)- u(y-z))(\phi(x)-\phi(y))}{|x-y|^{n+2s}} \d x \d y \right) \d z
      \\&= \int_{\R^n} \iint_{\R^n \times \R^n} \eta_{\epsilon}(z) \frac{ (u(x)- u(y))(\phi(x+z)-\phi(y+z))}{|x-y|^{n+2s}} \d x \d y \d z \\&=   \iint_{\R^n \times \R^n}  \frac{ (u(x)- u(y))\int_{\R^n} \eta_{\epsilon}(z) (\phi(x+z)-\phi(y+z)) \, \d z}{|x-y|^{n+2s}} \d x \d y \\
      &=   \iint_{\R^n \times \R^n}  \frac{ (u(x)- u(y))(\eta_{\epsilon}  \ast \phi(x)-\eta_{\epsilon}  \ast \phi(y))}{|x-y|^{n+2s}} \d x \d y \\
      &\geq \int_{\R^n} f(x) (\eta_{\epsilon} \ast \phi)(x) \, \d x =\int_{\R^n} (\eta_{\epsilon} \ast 
 f)(x) \phi(x) \, \d x 
    \end{align*}
    for every non-negative $\phi \in C^{\infty}_0(\Omega_{\epsilon})$. In conclusion, $(-\Delta)^s \eta_{\epsilon} \ast u \geq \eta_{\epsilon} \ast f$ locally weakly in $\Omega_{\epsilon}$ 
 for every $\epsilon>0$. Since $\eta_{\epsilon} \ast u$ is smooth, by Remark \ref{rmk:easyproof} and Proposition \ref{prop: equipointviscos}, $(-\Delta)^s \eta_{\epsilon} \ast u 
\geq \eta_{\epsilon} \ast f$ in $\Omega_{\epsilon}$ in the viscosity sense. Now, $u_{\epsilon}$ converges pointwise to $u$ a.e. in $\R^n$ and locally uniformly in $\Omega$. Moreover, $\eta_{\epsilon} \ast f$ converges locally uniformly to $f$ in $\Omega.$ Hence, by \cite[Lem. 4.5]{CS2},  $(-\Delta)^s u \geq  f$ in $\Omega$ in the viscosity sense.

\end{proof}

\section{Continuity at the boundary}\label{sec4}
In this section, we provide backgrounds for the Wiener criterion for the $s$-fractional Laplacian with the right-hand side.

A sufficient condition for equations with zero boundary data and smooth compactly supported right-hand side has been established in \cite{E}. The necessary and sufficient condition for continuous compactly supported boundary data is obtained in \cite{Bj} for $s$-harmonic functions. In \cite{KLL}, a necessary and sufficient condition for vanishing right-hand side and continuous boundary data has been established in the context of weak solutions for the more general case of fractional $p$-Laplace type operators. In \cite[A.4.]{Li}, the sufficiency part of the results in \cite{KLL} has been extended to include equations with bounded right-hand sides. In the case of $p=2$, the criterion reads as follows:
\begin{defn}
Define
\begin{align*}
    \Cap_{s,2}(\overline{B(\xi_0,r)} \setminus \Omega,B(\xi_0,2r)) := \inf_v \iint_{\R^n \times \R^n} \frac{|v(x)-v(y)|^2}{|x-y|^{n+2s}} \, \d x \d y,
\end{align*}
    where the infimum is taken over all $v \in C^{\infty}_0(B(\xi_0,2r))$ such that $v \geq 1$ on $\overline{B(\xi_0,r)} \setminus \Omega$ for $\xi_0 \in \partial \Omega, r>0$. We say that a point $\xi_0 \in \partial \Omega$ satisfies the Wiener criterion for fractional linear Laplacian if
    \begin{align*}
        \int_{0}^1 \frac{\Cap_{s,2}(\overline{B(\xi_0,r)} \setminus \Omega,B(\xi_0,2r))}{r^{n-2s}} \, \frac{\d r}{r} = \infty.
    \end{align*}
\end{defn}

We provide details for the sufficient part for general right-hand sides in $\L^\infty$ and $\L^p$ spaces, using a perturbation argument.

First, we define the notion of regular boundaries and the Wiener criterion.
\begin{defn}
\label{def:regularbound}
A point $\xi_0 \in \partial \Omega$ is regular for the $s$-fractional Laplacian if for every $u \in V^{s,2}(\Omega|\R^n) , f \in \L^{\infty}(\Omega), g \in C(\R^n) \cap V^{s,2}(\Omega|\R^n)$, satisfying \eqref{eq:weaksol} in the weak sense, we have $\lim_{\xi \to \xi_0} u(\xi) = g(\xi_0)$. We say that $\Omega$ has Wiener regular boundary for the $s$-fractional Laplacian if all the points on $\partial \Omega$ are regular for the $s$-fractional Laplacian.

\end{defn}

The next lemma provides the existence of weak solutions. We refer to \cite[Thm. 4.4]{FKV} for proof.
\begin{lem}
\label{lem:existenceweaksol}
    Let $f \in \L^q(U)$ for a bounded open set $U \subset \R^n$ with $q \geq \frac{2n}{n+2s}$. Then, there exists a weak solution $u \in \mathcal{D}^{s,2}_0(U)$ satisfying 
    \begin{align*}
        (-\Delta)^s u &= f, \quad \text{in } U.
    \end{align*}
\end{lem}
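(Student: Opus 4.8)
The statement to prove is Lemma~\ref{lem:existenceweaksol}, the existence of a weak solution $u \in \mathcal{D}^{s,2}_0(U)$ to $(-\Delta)^s u = f$ in $U$ for $f \in \L^q(U)$ with $q \geq \frac{2n}{n+2s}$. The natural approach is the direct method in the calculus of variations, applied to the energy functional
\[
J(v) := \frac{1}{2} \iint_{\R^n \times \R^n} \frac{|v(x)-v(y)|^2}{|x-y|^{n+2s}} \, \d x \d y - \int_{U} f(x) v(x) \, \d x
\]
over the Hilbert space $\mathcal{D}^{s,2}_0(U)$. First I would check that $J$ is well-defined and finite on $\mathcal{D}^{s,2}_0(U)$: the assumption $q \geq \frac{2n}{n+2s} = (2^\ast_s)'$ is exactly what is needed so that, by Hölder's inequality together with the fractional Sobolev inequality (Theorem~\ref{thm:sobolevpoincare-ineq}), the linear term $v \mapsto \int_U f v$ is a bounded linear functional on $\mathcal{D}^{s,2}_0(U)$; one should note the boundary cases $2s = n$ and $2s > n$ are handled by the corresponding parts of that theorem, using that $U$ is bounded (and for $2s = n$ one uses $q > 1$, which follows since $\frac{2n}{n+2s} = 1$ there, interpreting the inequality as allowing any $p < \infty$). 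Thus $J(v) \geq \frac{1}{2}[v]_{\H^s(\R^n)}^2 - C\|f\|_{\L^q(U)}[v]_{\H^s(\R^n)}$, which is coercive and bounded below.

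Next I would run the standard argument: take a minimizing sequence $(v_k)$; coercivity gives a uniform bound on $[v_k]_{\H^s(\R^n)}$, so by reflexivity of the Hilbert space $\mathcal{D}^{s,2}_0(U)$ a subsequence converges weakly to some $u \in \mathcal{D}^{s,2}_0(U)$. The quadratic Dirichlet term is convex and strongly continuous, hence weakly lower semicontinuous, while the linear term is weakly continuous (being a bounded linear functional). Therefore $J(u) \leq \liminf_k J(v_k) = \inf J$, so $u$ is a minimizer. Finally, computing the first variation: for any $\phi \in C^\infty_0(U)$ the function $t \mapsto J(u + t\phi)$ is a smooth (quadratic) function of $t$ with a minimum at $t=0$, so differentiating gives
\[
\iint_{\R^n \times \R^n} \frac{(u(x)-u(y))(\phi(x)-\phi(y))}{|x-y|^{n+2s}} \, \d x \d y = \int_{U} f(x)\phi(x)\, \d x,
\]
which by density of $C^\infty_0(U)$ in $\mathcal{D}^{s,2}_0(U) = V_0^{s,2}(U)$ extends to all test functions, i.e.\ $u$ solves \eqref{eq:weaksol} weakly with $g = 0$ in the sense of Definition~\ref{def:weaksol}. (Since the paper cites \cite[Thm.~4.4]{FKV} for this, one could alternatively just invoke it, but the above is the self-contained route.)

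I do not expect a serious obstacle here; the one point requiring genuine care is the verification that the linear functional $v \mapsto \int_U fv$ is bounded on $\mathcal{D}^{s,2}_0(U)$ precisely under the hypothesis $q \geq \frac{2n}{n+2s}$, and that this works uniformly across the three regimes $2s < n$, $2s = n$, $2s > n$ — this is where the boundedness of $U$ and the full strength of Theorem~\ref{thm:sobolevpoincare-ineq} (and Hölder's inequality to pass from the Sobolev exponent down to $q$) are used. Everything else — coercivity, weak lower semicontinuity, and the Euler--Lagrange computation — is routine Hilbert-space convex minimization.
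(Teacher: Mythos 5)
The paper does not actually prove this lemma; it simply refers to \cite[Thm.~4.4]{FKV}. Your self-contained argument by the direct method is correct and is the standard way to establish the result: since the bilinear form is just the inner product of the Hilbert space $\mathcal{D}^{s,2}_0(U)$, your minimization is equivalent to (and could be shortened to) an application of the Riesz representation theorem or Lax--Milgram once you have shown that $v\mapsto\int_U fv$ is a bounded functional, which is indeed the only step where the hypothesis on $q$ and Theorem~\ref{thm:sobolevpoincare-ineq} enter. So your proof supplies what the paper outsources; there is no conflict of method, only a difference in level of detail. One small caveat: in the borderline case $2s=n$ (i.e.\ $n=1$, $s=1/2$) the threshold $\frac{2n}{n+2s}$ equals $1$, and your parenthetical claim that ``$q>1$ follows'' is not justified --- the hypothesis $q\geq 1$ admits $q=1$, for which H\"older against $\L^p(U)$, $p<\infty$, does not bound $\int_U fv$ by $[v]_{\H^s(\R^n)}$. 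This is a gap in the lemma's statement as much as in your write-up (the cited theorem in \cite{FKV} likewise requires the functional to be in the dual space), and it would be cleanest to either exclude the case $2s=n$, $q=1$ or to require $q>1$ there; for all other regimes your verification of boundedness, coercivity, weak lower semicontinuity, and the Euler--Lagrange equation is complete.
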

\begin{prop} \label{prop:Wiener}
    A point $\xi_0 \in \partial \Omega$ is regular for the $s$-fractional Laplacian if and only if it satisfies the Wiener criterion for the $s$-fractional Laplacian.
\end{prop}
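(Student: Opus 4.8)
The plan is to prove the Wiener criterion for the fractional Laplacian \emph{with a bounded right-hand side} by reducing it to the known case of zero right-hand side (the criterion of \cite{KLL, Bj, Li}) through a perturbation argument. The point is that the Wiener criterion is a property of the geometry of $\Omega$ near $\xi_0$ only, so the right-hand side $f$ should be irrelevant to regularity as long as it is bounded.

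\medskip
\emph{Sufficiency.} Assume $\xi_0 \in \partial \Omega$ satisfies the Wiener criterion, and let $u \in V^{s,2}(\Omega|\R^n)$, $f \in \L^\infty(\Omega)$, $g \in C(\R^n)\cap V^{s,2}(\Omega|\R^n)$ solve \eqref{eq:weaksol} weakly. Fix a large ball $B$ with $\Omega \Subset B$. By Lemma \ref{lem:existenceweaksol} (note $f\id_\Omega \in \L^\infty(B) \subset \L^q(B)$ for every $q$, in particular for $q \geq \frac{2n}{n+2s}$) there is $w \in \mathcal{D}^{s,2}_0(B)$ with $(-\Delta)^s w = f\id_\Omega$ in $B$. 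The function $w$ is the solution of a problem with \emph{zero} boundary data on $\partial B \supset \partial\Omega$ (so in particular $w$ is obtained as a weak solution on the domain $B$), but more to the point $w$ is $s$-harmonic in $B\setminus\overline\Omega$ and, by interior regularity for bounded right-hand sides (e.g. \cite[Prop. 2.1.9]{R} type estimates, or the $C^s$-estimates available from $f \in \L^\infty$), $w$ is continuous on $\R^n$; in any case it is continuous on a neighbourhood of $\partial\Omega$ from outside and takes the value $w(\xi_0)$ continuously at $\xi_0$ because $\xi_0$, satisfying the Wiener criterion, is a regular boundary point for the \emph{zero right-hand side} problem on $B$ with the continuous boundary datum $0$ — wait, more carefully: apply the zero-RHS Wiener theorem on the domain $B\setminus\overline\Omega$ or simply use that $w\in \mathcal D^{s,2}_0(B)$ restricted to $\Omega$ solves a Dirichlet problem whose boundary data is the trace of $w$ on $\R^n\setminus\Omega$, which we know is continuous. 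The cleaner route: set $\tilde u := u - w$. Then $\tilde u \in V^{s,2}(\Omega|\R^n)$ and $(-\Delta)^s \tilde u = 0$ weakly in $\Omega$, with exterior data $\tilde g := g - w|_{\R^n\setminus\Omega} \in C(\R^n\setminus\Omega)$ (using continuity of $w$). By the zero-right-hand-side Wiener criterion (\cite{KLL} for $p=2$, as recalled before Definition \ref{def:regularbound}), the Wiener criterion at $\xi_0$ implies $\lim_{\xi\to\xi_0}\tilde u(\xi) = \tilde g(\xi_0)$. Since $w$ is continuous at $\xi_0$, $\lim_{\xi\to\xi_0} u(\xi) = \tilde g(\xi_0) + w(\xi_0) = g(\xi_0)$, i.e. $\xi_0$ is regular.

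\medskip
\emph{Necessity.} Conversely, suppose $\xi_0$ is regular for the $s$-fractional Laplacian in the sense of Definition \ref{def:regularbound}; we must show it satisfies the Wiener criterion. Specializing Definition \ref{def:regularbound} to $f \equiv 0$ and continuous compactly supported exterior data $g$, regularity with $f=0$ is exactly regularity in the classical ($s$-harmonic) sense, which by \cite{KLL} (or \cite{Bj}) is equivalent to the Wiener criterion. So necessity is immediate: a point regular for \emph{all} bounded $f$ is in particular regular for $f=0$, hence satisfies the Wiener criterion.

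\medskip
The main obstacle is the first step of sufficiency: establishing that the corrector $w$ (solving $(-\Delta)^s w = f\id_\Omega$ in a big ball with zero exterior data) is continuous up to and including $\partial\Omega$, with the correct value at $\xi_0$ — and in particular that subtracting it off genuinely converts the problem into a zero-right-hand-side Dirichlet problem with a \emph{continuous} exterior datum, so that the cited zero-RHS Wiener theorem applies verbatim. This requires: (i) global boundedness/continuity of $w$ from $f\in\L^\infty$, which follows from standard De Giorgi–Nash–Moser-type $\L^\infty$ bounds together with interior $C^s$-regularity for the fractional Laplacian with bounded RHS; and (ii) continuity of $w$ at $\xi_0$ itself, which again follows from the zero-RHS Wiener theory applied to $w$ on $B$ — here one uses that $w\in\mathcal D^{s,2}_0(B)$ solves $(-\Delta)^s w = f\id_\Omega = 0$ in $B\setminus\overline\Omega$, so continuity of $w$ on $\overline\Omega$ reduces to continuity from the complement, handled by the same Wiener argument or by the explicit representation of $w$ away from $\supp f$. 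The $\L^p$ case is identical once one checks $\L^p(\Omega)\subset \L^q(B)$ for $q\geq\frac{2n}{n+2s}$, i.e. $p\geq\frac{2n}{n+2s}$, and otherwise one truncates $f$ and uses a limiting/stability argument for the corrector.
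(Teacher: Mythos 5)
Your proposal is correct and follows essentially the same route as the paper: a corrector $w$ solving $(-\Delta)^s w = \pm f$ in a large smooth ball $B \supset \Omega$ with zero exterior data reduces the problem to the zero--right-hand-side Wiener criterion of \cite{KLL}, and necessity is immediate by specializing to $f=0$. The only difference is in how the continuity of $w$ on $\R^n$ is justified: the paper cites the $\L^\infty$ bound \cite[Cor.\ 5.2]{R} and continuity up to the ($C^2$) boundary of $B$ from \cite[Thm.\ 2]{SV}, whereas your discussion of this point is slightly muddled (continuity of $w$ at $\xi_0$ is a matter of \emph{interior} regularity in $B$ with bounded right-hand side, not of Wiener theory on $B\setminus\overline\Omega$), though you do name the correct ingredients.
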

\begin{proof}
  If $\xi_0 \in \partial \Omega$ is regular for the $s$-fractional Laplacian, then it satisfies the Wiener criterion for the $s$-fractional Laplacian by \cite[Thm. 1.1]{KLL}. Now, assume that $\xi_0 \in \partial \Omega$  satisfies the Wiener criterion for the $s$-fractional Laplacian. Let $u \in V^{s,2}_{g}(\Omega|\R^n), f \in \L^{\infty}(\Omega), g \in C(\R^n) \cap V^{s,2}(\Omega|\R^n)$, satisfying \eqref{eq:weaksol}. We take a ball $B$ large enough such that $\Omega \subset B.$ By Lemma \ref{lem:existenceweaksol}, there exists a weak solution $w \in \mathcal{D}^{s,2}_0(B)$ of
\begin{align*}
    (- \Delta)^s w = -f, \quad \text{in } B.
\end{align*}
By \cite[Cor. 5.2]{R}, we obtain $w \in \mathcal{D}^{s,2}_0(B) \cap \L^{\infty}(\R^n).$
Since $B$ has a $C^2$ boundary, by \cite[Thm. 2]{SV}, we have $w$ is continuous in $\R^n.$ Define the function $\Tilde{u} := u + w$. Then, $(- \Delta)^s \Tilde{u}=0$ weakly in $\Omega$ and $\Tilde{u}=w +g$ on $\R^n \setminus \Omega$. Hence, by \cite[Thm. 1.1]{KLL} and $w+g \in C(\R^n) \cap V^{s,2}(\Omega|\R^n)$, we obtain $ \lim_{\xi \to \xi_0} \Tilde{u}(\xi) = (w+g)(\xi_0)$. In conclusion, by $w \in C(\R^n)$, it is implied that $\lim_{\xi \to \xi_0} u(\xi) = g(\xi_0)$.

\end{proof}
Finally, we state the following observation.
\begin{prop}
    Let $f\in \L^p(\Omega)$, with $p>\frac{n}{2s}$, and $g \in C(\R^n) \cap V^{s,2}(\Omega|\R^n).$ Then, the unique solution of the Dirichlet problem \eqref{eq:weaksol} attains the boundary value continuously at any regular boundary point for the $s$-fractional Laplacian.
\end{prop}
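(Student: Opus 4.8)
The plan is to reduce the case $f \in \L^p(\Omega)$ with $p > \frac{n}{2s}$ to the case $f \in \L^\infty(\Omega)$ already covered by Proposition \ref{prop:Wiener}, by splitting off a globally continuous corrector that absorbs the rough part of the right-hand side. First I would enlarge the domain, taking a ball $B \supset \Omega$ with $C^2$ boundary, and extend $f$ by zero to $B$; since $p > \frac{n}{2s} \geq \frac{2n}{n+2s}$ (for $n \geq 1$, $0<s<1$), Lemma \ref{lem:existenceweaksol} applies and produces a weak solution $w \in \mathcal{D}^{s,2}_0(B)$ of $(-\Delta)^s w = f$ in $B$. The key regularity input is that because $p > \frac{n}{2s}$, the solution $w$ is bounded and in fact continuous on all of $\R^n$: boundedness follows from \cite[Cor. 5.2]{R} (or a De Giorgi--Nash--Moser iteration valid precisely in the supercritical range $p > \frac{n}{2s}$), and global continuity follows from interior Hölder estimates for the fractional Laplacian together with continuity at $\partial B$, which holds since $B$ has $C^2$ boundary (using \cite[Thm. 2]{SV} as in the proof of Proposition \ref{prop:Wiener}, noting $f \in \L^p(B) \subset \mathcal{D}_0^{s,2}(B)^\star$).

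Next I would set $\tilde u := u - w$, where $u \in V^{s,2}_g(\Omega|\R^n)$ is the (unique, by the energy/coercivity argument) weak solution of \eqref{eq:weaksol}. Then $\tilde u$ is a weak solution of the homogeneous problem $(-\Delta)^s \tilde u = 0$ in $\Omega$ with exterior data $\tilde u = g - w$ on $\R^n \setminus \Omega$. Since $w \in C(\R^n) \cap \L^\infty(\R^n)$ and $w|_\Omega \in \mathcal{D}_0^{s,2}(\Omega) \subset V^{s,2}(\Omega|\R^n)$ (hence $w \in V^{s,2}(\Omega|\R^n)$, using that $\mathcal{D}_0^{s,2}(B)$ restricts into $V^{s,2}(\Omega|\R^n)$), the exterior datum $g - w$ lies in $C(\R^n) \cap V^{s,2}(\Omega|\R^n)$. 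Therefore, at any boundary point $\xi_0 \in \partial \Omega$ that is regular for the $s$-fractional Laplacian — equivalently, by Proposition \ref{prop:Wiener}, satisfies the Wiener criterion — the characterization of \cite[Thm. 1.1]{KLL} for homogeneous equations with continuous boundary data gives $\lim_{\xi \to \xi_0} \tilde u(\xi) = (g-w)(\xi_0)$. Adding back $w$, which is continuous at $\xi_0$, yields $\lim_{\xi \to \xi_0} u(\xi) = g(\xi_0)$, which is the assertion.

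For uniqueness of the Dirichlet solution I would simply note that if $u_1, u_2 \in V^{s,2}_g(\Omega|\R^n)$ both solve \eqref{eq:weaksol} then $u_1 - u_2 \in V_0^{s,2}(\Omega) = \mathcal{D}_0^{s,2}(\Omega)$ is a valid test function, and testing the difference of the two weak formulations against it gives $[u_1 - u_2]_{\H^s(\R^n)}^2 = 0$, whence $u_1 = u_2$ by the fractional Poincaré inequality (Theorem \ref{thm:sobolevpoincare-ineq}).

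The main obstacle I anticipate is establishing the global continuity of the corrector $w$ in the supercritical range $p > \frac{n}{2s}$: one must combine the local $C^\alpha$ estimate (which needs $f \in \L^p_{\loc}$ with $p$ above the scaling-critical exponent $\frac{n}{2s}$, so that the Riesz-potential-type term $I_{2s}|f|$ is Hölder continuous) with continuity up to $\partial B$. The latter is clean because $B$ is smooth, but one should be slightly careful that the cited boundary continuity result \cite[Thm. 2]{SV} is stated for the right class of right-hand sides; since $\L^p(B) \hookrightarrow \mathcal{D}_0^{s,2}(B)^\star$ for all $p \geq \frac{2n}{n+2s}$, this is not an issue. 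The remainder of the argument is a routine repetition of the perturbation bookkeeping already carried out in the proof of Proposition \ref{prop:Wiener}.
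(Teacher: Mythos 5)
Your route is genuinely different from the paper's. You reduce to the homogeneous equation by subtracting a global corrector $w$ solving $(-\Delta)^s w = f$ in a large ball and then invoke the Wiener-criterion characterization of \cite{KLL} for $s$-harmonic functions with continuous exterior data $g-w$. The paper instead approximates $f$ in $\L^p(\Omega)$ by bounded functions $f_i$, solves the Dirichlet problems with right-hand sides $f_i$ (these attain the boundary value at regular points directly by Definition \ref{def:regularbound}), and passes to the limit using the stability estimate $\|u_i-u\|_{\L^\infty(\Omega)}\le C\|f_i-f\|_{\L^p(\Omega)}$ of \cite[Thm. 3.1]{BP}, which is exactly where $p>\frac{n}{2s}$ enters. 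The paper's argument therefore never needs any pointwise regularity of a solution with merely $\L^p$ data beyond this $\L^\infty$ bound; your argument buys a self-contained reduction to the homogeneous Wiener theory at the price of a harder regularity input.

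That input --- global continuity of $w$ for $f\in\L^p$ --- is the one step not covered by the references you cite: both \cite[Cor. 5.2]{R} and \cite[Thm. 2]{SV} are used in the paper only for bounded right-hand sides, and the interior H\"older estimate for $\L^p$ data with $p>\frac{n}{2s}$, while true via the Riesz-potential/Morrey argument you sketch, is neither proved nor cited anywhere in the paper. The cleanest patch is precisely the paper's device applied to $w$: approximate $f$ by $f_i\in\L^\infty(B)$ in $\L^p(B)$, obtain correctors $w_i\in C(\R^n)$ from the bounded theory, and conclude $w\in C(\R^n)$ as a uniform limit via \cite[Thm. 3.1]{BP} --- but once you do this you may as well discard the corrector and argue directly on $u$, recovering the paper's proof. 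Two minor points: the chain $p>\frac{n}{2s}\ge\frac{2n}{n+2s}$ fails when $n=1$ and $s>\frac12$, but then $\frac{2n}{n+2s}<1\le p$, so Lemma \ref{lem:existenceweaksol} still applies; and your uniqueness argument is correct and standard.
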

\begin{proof}
    Assume that $\xi_0 \in \partial \Omega$ is a regular point for the $s$-fractional Laplacian.
    Since $\L^\infty(\Omega)$ is dense in $\L^p(\Omega)$, we can find $f_i \in \L^\infty(\Omega)$ such that
    $$f_i \to f \quad \text{in } \L^p(\Omega).$$
    Now, consider the weak solutions $u_i$ to the following boundary value problems
    \[
    \begin{aligned}
(-\Delta)^s u_i = f_i, \quad &\text{in} \; \Omega, \\
u=g, \quad &\text{in}\; \R^n\setminus \Omega.
\end{aligned}
    \]
    Since $\xi_0$ is a regular point for the $s$-fractional Laplacian, we obtain
    \begin{equation}\label{eq:regular-lim-un}
        \lim_{\xi \to \xi_0} u_i(\xi) = g(\xi_0).
    \end{equation}
 Notice that $u_i-u \in \mathcal{D}^{s,2}_0(\Omega)$ and
 $$(-\Delta)^s (u_i-u) = f_i-f,$$
 weakly in $\Omega.$ Hence, using \cite[Thm. 3.1]{BP}, we have
 $$\| u_i-u\|_{\L^\infty(\Omega)} \leq C \|f_i-f\|_{\L^p(\Omega)},$$
 with $C$ depending on $s,p,|\Omega|$. Thus, we have the uniform convergence of $u_i$ to $u$ in $\Omega$. This uniform convergence, together with \eqref{eq:regular-lim-un}, implies
 $$\lim_{\xi \to \xi_0} u(\xi) = g(\xi_0). $$
\end{proof}

\section{Isolation of the first eigenvalue of the fractional Lane-Emden equation}\label{sec5}

In this section, we prove 
Theorem \ref{thm:isolation}. We assume that $ \Omega$ has a Wiener regular boundary for the $s$-fractional Laplacian.

For the proof of Theorem \ref{thm:isolation}, we need the following results.
\begin{lem}\label{lem:bound}
   For $1< q < 2, \lambda >0,$ all the weak solutions $v \in \mathcal{D}^{s,2}_0(\Omega)$ of
\begin{align*}
    (-\Delta)^s v = \lambda \|v \|_{\L^q(\Omega)}^{2-q}|v|^{q-2} v, \quad \text{in } \Omega,
\end{align*}
  satisfy
  \begin{equation}
\begin{aligned}
\label{eq:boundedness}
    \|v\|_{\L^{\infty}(\Omega)} \leq  \begin{cases}
       C \lambda^{\frac{2^{\ast}_s}{2(2^{\ast}_s-q)}}\|v\|_{\L^q(\Omega)}, \quad &
    \text{if } 2s < n,\\
     C \lambda\|v\|_{\L^q(\Omega)}, \quad &\text{if } 2s \geq n,
    \end{cases}
\end{aligned}
\end{equation}
 where $C$ depends on $n,q,s,|\Omega|$. Moreover, for all $\lambda_1,\lambda_2$ and weak solutions $u,v \in \mathcal{D}^{s,2}_0(\Omega)$ of $(-\Delta)^s u= \lambda_1 |u|^{q-2} u,(-\Delta)^s v= \lambda_2 |v|^{q-2} v$ in $\Omega$, we have
\begin{align}
\label{eq:Hölder}
  \|u-v\|_{\L^{\infty}(\Omega)} \leq C \|\lambda_1 |u|^{q-2} u - \lambda_2 |v|^{q-2} v\|_{\L^{r}(\Omega)},
\end{align}
  for every $r > \frac{n}{2s}$, where $C$ is a constant depending on $n,q,s,r,\Omega$.
\end{lem}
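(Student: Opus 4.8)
The plan is to derive both estimates from elliptic regularity ($\L^\infty$ bounds for the fractional Dirichlet problem) combined with a bootstrap / absorption argument. For the first bound, I would set $v \in \mathcal{D}^{s,2}_0(\Omega)$ to be a weak solution of $(-\Delta)^s v = \lambda \|v\|_{\L^q(\Omega)}^{2-q} |v|^{q-2} v =: F$ in $\Omega$ with zero exterior data. The key is that the right-hand side lies in $\L^{q'}(\Omega)$ where $q' = q/(q-1)$, since $|F| = \lambda \|v\|_{\L^q}^{2-q} |v|^{q-1}$ and $\||v|^{q-1}\|_{\L^{q'}(\Omega)} = \|v\|_{\L^q(\Omega)}^{q-1}$. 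When $2s<n$, the global $\L^\infty$--$\L^p$ estimate of \cite[Thm. 3.1]{BP} (already invoked in the preceding proposition) applies provided $q' > n/(2s)$; if $q' \le n/(2s)$ one first uses the Sobolev embedding $\mathcal{D}^{s,2}_0(\Omega) \hookrightarrow \L^{2^*_s}(\Omega)$ together with the equation to gain integrability, iterating a Moser-type scheme finitely many times until the exponent exceeds $n/(2s)$, and then applies \cite{BP}. Tracking the exponents through this (finite) iteration produces the power $\lambda^{2^*_s/(2(2^*_s-q))}$; the case $2s \ge n$ is easier because $2^*_s = \infty$ and one step of the $\L^\infty$ estimate in Theorem \ref{thm:sobolevpoincare-ineq} directly gives the linear dependence on $\lambda$.

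For the bootstrap I would work with $w = v/\|v\|_{\L^q(\Omega)}$, which is a weak solution of $(-\Delta)^s w = \lambda |w|^{q-2} w$ with $\|w\|_{\L^q(\Omega)} = 1$, and prove $\|w\|_{\L^\infty(\Omega)} \le C\lambda^{2^*_s/(2(2^*_s-q))}$; rescaling back gives \eqref{eq:boundedness}. Starting from $w \in \L^{2^*_s}(\Omega)$, the equation gives $(-\Delta)^s w \in \L^{p_0}$ with $p_0 = 2^*_s/(q-1)$, hence (by \cite{BP} or Sobolev) $w \in \L^{p_1}$ with $p_1$ determined by the standard gain-of-$2s$-derivatives relation $1/p_1 = 1/p_0 - 2s/n$ (with the usual convention when the right side is $\le 0$). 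Iterating, $1/p_{k+1} = (q-1)/p_k - 2s/n$; since $q-1<1$ this is a contraction and the exponents increase past $n/(2s)$ in finitely many steps, at which point \cite[Thm. 3.1]{BP} closes the estimate with an explicit constant. Carefully collecting the multiplicative constants $C\lambda^{\cdots}$ at each step, using that at each stage one only pays a power of $\lambda$ and a dimensional constant, yields the claimed exponent of $\lambda$. The main obstacle will be bookkeeping: verifying that the finite iteration terminates and that the accumulated powers of $\lambda$ sum exactly to $2^*_s/(2(2^*_s-q))$ rather than some other exponent.

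For the second estimate \eqref{eq:Hölder}, let $u, v$ solve $(-\Delta)^s u = \lambda_1 |u|^{q-2} u$ and $(-\Delta)^s v = \lambda_2 |v|^{q-2} v$ weakly with zero exterior data. Then $u - v \in \mathcal{D}^{s,2}_0(\Omega)$ solves $(-\Delta)^s(u-v) = \lambda_1 |u|^{q-2} u - \lambda_2 |v|^{q-2} v$ weakly in $\Omega$; this difference lies in $\L^r(\Omega)$ for $r > n/(2s)$ by hypothesis (note $u, v \in \L^\infty$ by the first part, so $|u|^{q-2}u, |v|^{q-2}v \in \L^\infty \subset \L^r$). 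Applying \cite[Thm. 3.1]{BP} directly to $u-v$ gives $\|u-v\|_{\L^\infty(\Omega)} \le C \|\lambda_1 |u|^{q-2} u - \lambda_2 |v|^{q-2} v\|_{\L^r(\Omega)}$ with $C = C(n,s,r,|\Omega|)$, which is exactly \eqref{eq:Hölder}; this part is essentially immediate from the cited linear estimate and requires no iteration.
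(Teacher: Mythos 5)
Your treatment of the second estimate \eqref{eq:Hölder} is exactly the paper's argument: $u-v\in\mathcal{D}^{s,2}_0(\Omega)$ solves the linear problem with right-hand side $\lambda_1|u|^{q-2}u-\lambda_2|v|^{q-2}v$, and \cite[Thm.~3.1]{BP} gives the bound. For the first estimate the paper simply cites \cite[Prop.~3.1]{FL}, so a self-contained bootstrap is a legitimate alternative route in principle; however, your proposed iteration does not produce the exponent $\frac{2^*_s}{2(2^*_s-q)}$ stated in the lemma, and the ``bookkeeping obstacle'' you flag is a genuine gap, not a formality.

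Concretely: normalizing $\|v\|_{\L^q(\Omega)}=1$, testing the equation with $v$ gives $[v]_{\H^s(\R^n)}^2=\lambda$, hence $\|v\|_{\L^{2^*_s}(\Omega)}\le C\lambda^{1/2}$. In the case where your iteration terminates in one step, i.e.\ $2^*_s/(q-1)>n/(2s)$ (e.g.\ $n=1$, $s=2/5$, $q=3/2$), applying \cite[Thm.~3.1]{BP} once yields
\begin{equation*}
\|v\|_{\L^\infty(\Omega)}\le C\lambda\,\|v\|_{\L^{2^*_s}(\Omega)}^{q-1}\le C\lambda^{\frac{q+1}{2}},
\end{equation*}
and more generally your recursion $a_{k+1}=1+(q-1)a_k$, $a_0=\tfrac12$, accumulates an exponent depending on the (integer) number of steps. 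This equals $\frac{2^*_s}{2(2^*_s-q)}$ only when $2^*_s=q+1$ (in the example above it gives $\lambda^{5/4}$ versus the claimed $\lambda^{10/17}$), and neither bound implies the other for all $\lambda>0$. The stated exponent comes from a genuine Moser/De Giorgi iteration with test functions $|v|^{\beta-2}v$ and a convergent infinite product of constants, as in \cite[Prop.~3.1]{FL}, rather than from a finite linear bootstrap through the scale of $\L^p$ spaces. If you only need the lemma for the proof of Theorem~\ref{thm:isolation}, where $\lambda_i\to\Lambda_q$ is bounded, any polynomial power of $\lambda$ suffices and your argument (once the subcritical-range $\L^p$-regularity step is properly referenced) would do; but as a proof of the lemma as stated, the exponent is wrong.
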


\begin{proof}
    We refer to \cite[Prop. 3.1]{FL} for the proof of the first part. For the second part, we have $u-v \in \mathcal{D}^{s,2}_0(\Omega)$ and $(-\Delta)^s (u-v) = \lambda_1 |u|^{q-2} u - \lambda_2 |v|^{q-2} v$ weakly in $\Omega$. Hence, by \cite[Thm. 3.1]{BP}, we conclude the second part.
    
\end{proof}
\begin{prop}
For every $\lambda>0$, the weak solution $v \in \mathcal{D}^{s,2}_0(\Omega)$ of 
\[
\begin{aligned}
(-\Delta)^s v= \lambda |v|^{q-2}v, \quad &\text{in } \Omega,
\end{aligned}
\]
satisfies
\begin{equation}
\label{eq:Schauder}
\begin{aligned}
     v \in C^{2s+\epsilon}_{\loc}(\Omega)\cap \L_{2s}^1(\R^n), \quad \text{if } 2s<1,\\
    v \in C^{1,2s+\epsilon-1}_{\loc}(\Omega)\cap \L_{2s}^1(\R^n), \quad \text{if } 2s\geq 1.
\end{aligned}
\end{equation}
 for some $\epsilon >0$. In particular, $v$ is also a pointwise solution.
\end{prop}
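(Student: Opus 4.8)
The plan is a one-step interior bootstrap; the only delicate point is that the nonlinearity $\Phi(t):=|t|^{q-2}t=|t|^{q-1}\sgn(t)$ is merely Hölder, not Lipschitz, at the origin, so the iteration must be arranged so that the $2s$ orders of regularity gained by the Schauder estimate outrun this loss. First I would record that the right-hand side is bounded. Since $v\in\mathcal{D}^{s,2}_0(\Omega)$ it is extended by $0$ outside $\Omega$, and by the first part of Lemma~\ref{lem:bound} one has $v\in\L^{\infty}(\R^n)$ with support in $\overline{\Omega}$; in particular $v\in\L^1(\R^n)\subset\L^1_{2s}(\R^n)$, and, since $0<q-1<1$, the datum $f:=\lambda\,\Phi(v)$ satisfies $\|f\|_{\L^{\infty}(\Omega)}\le\lambda\|v\|_{\L^{\infty}(\Omega)}^{q-1}<\infty$.

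Next, since $v\in\H^s_{\loc}(\Omega)\cap\L^1_{2s}(\R^n)$ solves $(-\Delta)^s v=f$ weakly in $\Omega$ with $f\in\L^{\infty}(\Omega)$, interior regularity for the fractional Laplacian (see, e.g., \cite{R}) gives $v\in C^{\gamma_0}_{\loc}(\Omega)$ for every $\gamma_0\in(0,\min\{2s,1\})$, and I would fix one such $\gamma_0$ so small that $2s+\gamma_0(q-1)<1$ when $2s<1$, respectively $2s+\gamma_0(q-1)<2$ when $2s\ge1$ (possible since $1-2s>0$, resp.\ $2-2s>0$). Because $0<q-1<1$, the map $\Phi$ is globally $(q-1)$-Hölder on $\R$, hence composing with $v$ gives $f=\lambda\,\Phi(v)\in C^{\beta}_{\loc}(\Omega)$ with $\beta:=\gamma_0(q-1)>0$. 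By the smallness of $\gamma_0$ the exponent $2s+\beta$ lies in $(0,1)$ or in $(1,2)$, so in particular $\beta,\beta+2s\notin\IN$, and the interior Schauder estimate for $(-\Delta)^s$ (see, e.g., \cite{R}) applies and yields $v\in C^{2s+\beta}_{\loc}(\Omega)$: this is the first line of \eqref{eq:Schauder} with $\epsilon=\beta$ when $2s<1$, and, reading $C^{2s+\beta}_{\loc}$ as $C^{1,2s+\beta-1}_{\loc}$ when $1<2s+\beta<2$, it is the second line with $\epsilon=\beta$ when $2s\ge1$. Finally, once \eqref{eq:Schauder} is established, Remark~\ref{Rmk:classical-weak} shows that the principal-value integral defining $(-\Delta)^s v(x)$ converges for every $x\in\Omega$ and that the weak solution $v$ is also a pointwise solution.

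The main obstacle is exactly the singularity of $\Phi$ at $0$: the naive scheme ``$v\in C^{\alpha}\Rightarrow f\in C^{\alpha}\Rightarrow v\in C^{\alpha+2s}$'' is not available, since passing through $\Phi$ costs the factor $q-1<1$ in the Hölder exponent of the right-hand side. What rescues the argument is that a single application of the Schauder estimate already gains a full $2s>0$, so one iteration suffices to land above $2s$ and reach $C^{2s+\epsilon}_{\loc}$; everything else is bookkeeping, namely keeping $\gamma_0$ small enough to avoid integer exponents and to fall into the correct one of the two regimes of \eqref{eq:Schauder}.
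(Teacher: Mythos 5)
Your argument is correct and follows essentially the same route as the paper: the $\L^\infty$ bound from Lemma \ref{lem:bound} makes the right-hand side bounded, interior Hölder regularity then makes $v$ (hence, by the $(q-1)$-Hölder continuity of $t\mapsto|t|^{q-2}t$, the right-hand side) locally Hölder continuous, and one application of the interior Schauder estimates of \cite{R} gives \eqref{eq:Schauder}, with Remark \ref{Rmk:classical-weak} upgrading to a pointwise solution. Your explicit bookkeeping of the exponent $\beta=\gamma_0(q-1)$ and the choice of $\gamma_0$ to avoid integer orders is a useful elaboration of details the paper leaves implicit, but it is not a different method.
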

\begin{proof}
    By Lemma \ref{lem:bound}, $v \in \L^{\infty}(\Omega).$ Hence, $\lambda |v|^{q-2} v$ is bounded in $\Omega.$ Then, using \cite[Thm. 5.4]{S}, we obtain $v \in C^{\alpha}_{\loc}(\Omega)$ for some constant $\alpha>0$. In conclusion, $\lambda |v|^{q-2} v$ is locally Hölder continuous in $\Omega$. Now, we are in the position to use the Schauder-type estimates in \cite[Section 6]{R} to arrive at \eqref{eq:Schauder}. Also, Remark \ref{Rmk:classical-weak} implies $(-\Delta)^s v = \lambda |v|^{q-2} v$ pointwise in $\Omega.$
\end{proof}

\begin{proof}[Proof of Theorem \ref{thm:isolation}]
    For the sake of contradiction, let $v_i \in \mathcal{D}^{s,2}_0(\Omega) \setminus \{u,0\}$ be a sequence of functions such that
    \begin{align*}
        \|v_i\|_{\L^q(\Omega)}&= 1,\\
(-\Delta)^s v_i &= \lambda_i |v_i|^{q-2} v_i, \quad \text{in } \Omega,\\
 \lim_{i \to \infty} \lambda_i &= \Lambda_q.
    \end{align*}
 Then, $\|v_i\|_{\H^s(\R^n)} = \lambda_i$ and by fractional Sobolev embedding, see Theorem \ref{thm:fractionalsobolev}, up to a subsequence $v_i$ converges in $\L^2(\Omega)$ to $u \in \mathcal{D}_0^{s,2}(\Omega)$, satisfying
    \begin{align*}
        (-\Delta)^s u = \Lambda_q |u|^{q-2}u, \quad \text{in } \Omega.
    \end{align*}
    By \cite[Thm. A]{FL}, up to a multiplicative constant, we can assume that $u$ is positive on $\Omega$. Now, in the light of Remark \ref{Rmk:classical-weak}, the equations $(-\Delta)^s v_i = \lambda_i |v_i|^{q-2} v_i, \, (-\Delta)^s u = \Lambda_q u^{q-1}$ hold pointwise in $\Omega$. Since $ \Omega$ satisfies Wiener criterion, $u,v_i$ belong to $C(\overline \Omega).$ Using Lemma \ref{lem:bound},
\begin{equation}
\label{eq:boundednesssequence}
\begin{aligned}
     \|v_i\|_{\L^{\infty}(\Omega)} &\leq \begin{cases}
   C \lambda_i^{\frac{2^{\ast}_s}{2(2^{\ast}_s-q)}}, \quad &
    \text{if } 2s < n,\\
     C \lambda_i, \quad &\text{if } 2s \geq n,
     \end{cases}\\
       \|u\|_{\L^{\infty}(\Omega)} &\leq \begin{cases}
   C \Lambda_q^{\frac{2^{\ast}_s}{2(2^{\ast}_s-q)}}, \quad &
    \text{if } 2s < n,\\
     C \Lambda_q, \quad &\text{if } 2s \geq n,
     \end{cases}
     \end{aligned}
\end{equation} and
    \begin{align}
    \label{eq:dominaconvergence}
        \|u-v_i\|_{\L^{\infty}(\Omega)} \leq C
 \|\, \Lambda_q u^{q-1} - \lambda_i |v_i|^{q-1} v_i\|_{\L^r(\Omega)},    \end{align}
for any $r> \frac{n}{2s}$ and a constant $C$ depending on $n,q,s,r,\Omega.$ Since $v_i$ converges to $u$ in $\L^2(\Omega)$, it converges to $u$ almost everywhere in $\Omega$. Hence, by \eqref{eq:boundednesssequence} and Lebesgue's dominated convergence, we obtain $\lim_{i \to \infty} \|\, u^{q-1} - |v_i|^{q-1} v_i\|_{\L^r(\Omega)} =0 $ for any $1\leq r < \infty.$ In conclusion, by \eqref{eq:dominaconvergence}, we imply that $v_i$ converges uniformly to $u$ on $\Omega$. By \cite[Thm. A]{FL}, the functions $v_i$ are sign-changing. Choose a sequence of points $x_i \in \overline \Omega$ such that 
\begin{align}
\label{eq:maxpoint}
\frac{1}{i}u(x_i)-v_i(x_i)= \max_{x \in \overline \Omega}{\frac{1}{i}u(x) - v_i(x)}= \max_{x \in \R^n}{\frac{1}{i}u(x) - v_i(x)}:=m_i.    
\end{align}
    Since $v_{i}$ changes sign, the maximum $m_i$ is strictly positive and $x_i \in \Omega$. Let $\tilde{x}$ be a limit point of $x_i$ in $\overline{\Omega}$. Now, we rule out the possibility of $\tilde{x} \in \Omega$. Since otherwise due to the uniform convergence
    $$\frac{1}{i}u-v_i \to -u \quad \text{uniformly on} \; \Omega,$$
    we will have
    $$0 \leq \lim_{i \to \infty }m_i = -u(\tilde{x}) <0.$$
Hence, $\tilde{x} \in \partial \Omega$.
    After passing to a subsequence, we can choose a set $K \Subset \Omega$ such that $\dist(x_i,K) > \delta$ for some $\delta >0$ and $\lim_{i \to \infty} x_i = \tilde{x} \in \partial \Omega$.
    
    Now, by \eqref{eq:maxpoint}, we get
\begin{align*}
 v_i(x_i)-v_i(y)  \leq  \frac{1}{i} (u(x_i)- u(y)),
\end{align*}
for all $y \in \R^n.$ Hence,
    \[
    \begin{aligned}
      \frac{\lambda_i}{2} |v_{i}|^{q-2}v_{i}(x_i)&=  \int_{\R^n}\frac{v_i(x_i)-v_i(y)}{|x_i-y|^{n+2s}} \d y = \int_{\R^n \setminus K}\frac{v_i(x_i)-v_i(y)}{|x_i-y|^{n+2s}} \d y + \int_{K}\frac{v_i(x_i)-v_i(y)}{|x_i-y|^{n+2s}} \d y \\
        & \leq \frac{1}{i} \int_{\R^{n} \setminus K} \frac{u(x_i)-u(y)}{|x_i-y|^{n+2s}} \d y + \int_{K}\frac{v_i(x_i)-v_i(y)}{|x_i-y|^{n+2s}} \d y \\
        &= \frac{1}{i} \int_{\R^n} \frac{u(x_i)-u(y)}{|x_i-y|^{n+2s}} \d y - \frac{1}{i}\int_{K} \frac{u(x_i)-u(y)}{|x_i-y|^{n+2s}} \d y + \int_{K} \frac{v_i(x_i)-v_i(y)}{|x_i-y|^{n+2s}} \d y \\
       & := \mathcal{I}_1(i) + \mathcal{I}_2(i) + \mathcal{I}_3(i).
    \end{aligned}
    \]
    The left hand side converges to $u^{q-1}(\tilde{x})=0$. As for the right-hand side,
    $$ \lim_{i \to \infty } \mathcal{I}_{1}(i) = \lim_{ i \to \infty } \frac{\Lambda_q}{2i} u^{q-1}(x_i)= 0,$$
    $$\lim_{i \to \infty } \mathcal{I}_{2}(i)=0,$$
    where we used $|x_i-y| >\delta $ for $y \in K$ in the second equation.
    Since $\dist(x_i,K) > \delta$ and $v_i$ converges uniformly to $u$,
    \[
    \lim_{i \to \infty } \mathcal{I}_{3}(i)= \int_{K} \frac{u(\tilde{x}) - u(y)}{|\tilde{x}- y|^{n+2s}} \d y = - \int_{K}\frac{u(y)}{|\tilde{x}-y|^{n+2s}}  \d y .
    \]
    In conclusion,
    $$0 \leq \lim_{i \to \infty} \; \mathcal{I}_{1}(i)+\mathcal{I}_{2}(i)+\mathcal{I}_{3}(i) = - \int_{K} \frac{u(y)}{|\tilde{x}-y|^{n+2s}}  \d y < 0. $$  The contradiction above concludes the proof.
    \end{proof}
    \begin{rem}\label{rmk:easyproof}
        In the case of $C^{1,1}$ domains, one can apply a simpler argument to prove Theorem \ref{thm:isolation}, using Hopf's lemma, see \cite[Lem. 1.2]{GS}. Let $v_i$ be the same sequence as in the above proof, and let $d: \Omega \to \R$ be the function $d(x) := \dist(x,\partial \Omega)$ for all $x \in \Omega$. Then, by \cite[Thm. 1.2]{RS1}, $\|\frac{v_i}{d^s}\|_{C^{\alpha}(\overline \Omega)}$ is uniformly bounded. Also, by the same argument in the proof of Theorem \ref{thm:isolation} and passing to a subsequence, $v_i$ converges pointwise to a function $u$ which is positive on $\Omega$, up a multiplicative constant.  Hence, by the Arzelà–Ascoli theorem, $\frac{v_i}{d^s}$ converges uniformly to $\frac{u}{d^s}$ on $\overline \Omega$ up to a subsequence, since $v_i$ converges pointwise to $u$. In conclusion, if $v_i$ is sign changing, then $\frac{u}{d^s}$ goes to zero at a boundary point, which is in contradiction with Hopf's lemma. In conclusion, by  \cite[Thm. A]{FL}, the proof is completed.
    \end{rem}

\section{Generalized Hopf's lemma and global boundary Harnack inequalities for viscosity solutions}\label{sec6}
In this section, we prove a generalized Hopf's lemma and a  global version of boundary Harnack inequality. We cannot work with pointwise solutions, due to the lack of enough regularity. To get rid of the issue, we use viscosity solutions instead.

To prove Hopf's lemma and a generalized boundary Harnack theorem, we need the following technical lemma, together with the strong maximum principle. The proof of the lemma is essentially the same as the argument in the proof of Theorem \ref{thm:isolation}.
\begin{lem}
\label{thm:stabilityharnack}
    Let $K \Subset \Omega$ and $ (-\Delta)^s v_i \geq f_i, (-\Delta)^s u_i \leq g_i$ in $\Omega$ in the viscosity sense, where $u_i,v_i$ belong to $\L^{1}_{2s}(\Omega) \cap C(\overline \Omega)$ and $f_i,g_i$ belong to $ C(\Omega)$. Assume that
       \begin{align*}
           v_i \to v, \quad &\text{ uniformly in every } \Omega^\prime \Subset \Omega,\\   v >0, \quad &\text{ in } \Omega,\\
\limsup_{i \to \infty} f_i(x_i)     \geq  - 2 \int_{K} \frac{v(y)}{|x-y|^{n+2s}} \d y, \quad &\text{if } \lim_{i \to \infty} x_i =x \in \partial \Omega, \, \limsup_{i \to \infty } v_i(x_i) \leq 0,
    \end{align*}
    for a function $v: \Omega \to \R$, and \begin{equation*}
\begin{aligned}
\limsup_{i \to \infty} g_i(x_i)  \leq  1, \quad &\text{if } \lim_{i \to \infty} x_i \in \partial \Omega.\\
0 \leq u_i \leq  v_i, \quad &\text{in } \R^n \setminus \Omega.
       \end{aligned}
    \end{equation*}
Then, there exist an integer $N$ and a constant $C>0$ such that 
   \begin{equation*}
      u_i \leq C (\|u_i\|_{\L^{\infty}(\Omega)}+1)  v_i, \quad \text{in } \Omega,
   \end{equation*}
 for every  $i \geq N$.
\end{lem}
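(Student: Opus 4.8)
\emph{The plan.} I would argue by contradiction, mirroring the proof of Theorem~\ref{thm:isolation}; the only new ingredient is that the pointwise computations there must be replaced by their viscosity counterparts. Suppose the conclusion fails. Negating it, for every $k\in\IN$ there is $i_k\ge k$ with $\sup_\Omega\bigl(u_{i_k}-k(\|u_{i_k}\|_{\L^\infty(\Omega)}+1)v_{i_k}\bigr)>0$; passing to a subsequence on which $i_k$ increases and relabelling, one gets numbers $C_i\to\infty$ so that, writing $a_i:=\bigl(C_i(\|u_i\|_{\L^\infty(\Omega)}+1)\bigr)^{-1}$, we have $a_i\to0$, $a_i\|u_i\|_{\L^\infty(\Omega)}\le C_i^{-1}$, and $\sup_\Omega(a_iu_i-v_i)>0$. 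On $\R^n\setminus\Omega$ one has $a_iu_i-v_i\le(a_i-1)v_i\le0$ once $a_i\le1$ (using $0\le u_i\le v_i$ there), so for $i$ large $a_iu_i-v_i$ attains its maximum over $\R^n$, equal to its positive supremum over $\Omega$, at an interior point $x_i\in\Omega$; set $m_i:=a_iu_i(x_i)-v_i(x_i)>0$. From $0<a_iu_i(x_i)\le a_i\|u_i\|_{\L^\infty(\Omega)}\le C_i^{-1}\to0$ one gets $m_i\to0$ and $0<v_i(x_i)=a_iu_i(x_i)-m_i\to0$.

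\emph{Next, the contact point escapes to the boundary.} Along a subsequence $x_i\to\tilde x\in\overline\Omega$. If $\tilde x\in\Omega$, the uniform convergence $v_i\to v$ near $\tilde x$ would force $v_i(x_i)\to v(\tilde x)>0$, contradicting $v_i(x_i)\to0$; hence $\tilde x\in\partial\Omega$, and since $K\Subset\Omega$ we have $\dist(x_i,K)\ge\delta>0$ for $i$ large. Because $v_i(x_i)\to0$ (so $\limsup_{i}v_i(x_i)\le0$) and $x_i\to\tilde x\in\partial\Omega$, the hypotheses give
\[
\limsup_{i}f_i(x_i)\ \ge\ -2\int_{K}\frac{v(y)}{|\tilde x-y|^{n+2s}}\,\d y,\qquad \limsup_{i}g_i(x_i)\ \le\ 1.
\]

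\emph{Then the key estimate and passage to the limit.} Fix a compact set $K'$ with $K\subsetneq K'\Subset\Omega$ and $|K'\setminus K|>0$; then $\dist(x_i,K')\ge\delta'>0$ for $i$ large. Maximality of $x_i$ gives $v_i(x_i)-v_i(y)\le a_i\bigl(u_i(x_i)-u_i(y)\bigr)$ for all $y\in\R^n$. Since $v_i$ is a viscosity supersolution, $u_i$ a viscosity subsolution and $f_i,g_i$ are continuous, the equations may be evaluated at $x_i$; this is the one step that is not a plain pointwise computation, and I would handle it as usual by doubling the variables — replacing $x_i$ by nearby points at which $u_i$ and $v_i$ are touched by smooth paraboloids, the penalty terms dropping out in the limit. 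Splitting the integral defining $(-\Delta)^sv_i(x_i)$ over $K'$ and $\R^n\setminus K'$, estimating the piece over $\R^n\setminus K'$ first by the displayed inequality and then by $(-\Delta)^su_i(x_i)\le g_i(x_i)$, one obtains, exactly as in Theorem~\ref{thm:isolation},
\[
\frac{f_i(x_i)}{2}\ \le\ \int_{K'}\frac{v_i(x_i)-v_i(y)}{|x_i-y|^{n+2s}}\,\d y\ +\ \frac{a_i\,g_i(x_i)}{2}\ -\ a_i\int_{K'}\frac{u_i(x_i)-u_i(y)}{|x_i-y|^{n+2s}}\,\d y.
\]
As $i\to\infty$ the last two terms vanish: $a_ig_i(x_i)\le a_i(1+o(1))\to0$, and the third term is at most $2a_i\|u_i\|_{\L^\infty(\Omega)}\,|K'|\,(\delta')^{-(n+2s)}\le2C_i^{-1}|K'|(\delta')^{-(n+2s)}\to0$ in absolute value; while, by the uniform convergence $v_i\to v$ on the compact set $K'$ together with $v_i(x_i)\to0$ and $x_i\to\tilde x$, the first term converges to $-\int_{K'}v(y)|\tilde x-y|^{-(n+2s)}\,\d y$. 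Hence $\limsup_{i}f_i(x_i)\le-2\int_{K'}v(y)|\tilde x-y|^{-(n+2s)}\,\d y$, which is \emph{strictly} less than $-2\int_{K}v(y)|\tilde x-y|^{-(n+2s)}\,\d y$ because $v>0$ on $\Omega$ and $|K'\setminus K|>0$. This contradicts the lower bound of the previous step, and the lemma follows.

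\emph{The main obstacle.} The skeleton is that of Theorem~\ref{thm:isolation}: a contradiction argument resting on a contact point forced to the boundary. The genuinely new difficulty is that $u_i,v_i$ are only viscosity sub/supersolutions and need not admit a pointwise fractional Laplacian at $x_i$; this is resolved by the standard viscosity devices (doubling of variables, or sup-/inf-convolutions), available because $f_i$ and $g_i$ are continuous. A second, minor point is that the hypothesis on $\limsup_{i}f_i(x_i)$ is exactly tight for the set $K$, so one must run the limit on a slightly larger compact set $K'\supsetneq K$ to produce the strict inequality that closes the contradiction — and it is precisely the strict positivity of $v$ on $\Omega$ that makes $\int_{K'}v\,|\tilde x-\cdot|^{-(n+2s)}>\int_{K}v\,|\tilde x-\cdot|^{-(n+2s)}$.
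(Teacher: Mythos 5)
Your proposal follows essentially the same route as the paper: argue by contradiction, locate the maximum of a rescaled difference $a_iu_i-v_i$ at an interior point $x_i$, force $x_i\to\tilde x\in\partial\Omega$, split the nonlocal integral into a piece near $K$ and a piece controlled by the maximality inequality $v_i(x_i)-v_i(y)\le a_i(u_i(x_i)-u_i(y))$, and close the contradiction by making the hypothesis on $f_i$ strict through a slight enlargement of $K$ (the paper does this enlargement at the outset as a WLOG reduction; you do it at the end via $K'\supsetneq K$ — same device). Two remarks. First, your displayed inequality is written as if $(-\Delta)^sv_i(x_i)$ and $(-\Delta)^su_i(x_i)$ were separately defined pointwise, which is exactly what is unavailable; you flag this and propose doubling of variables, but the paper's cleaner execution is to pass to the single function $a_iu_i-v_i$, which by \cite[Lem. 5.8]{CS2} is a viscosity subsolution of $(-\Delta)^sw\le a_ig_i-f_i$, and to test it at its interior global maximum with the explicit paraboloid $|x-x_i|^2+m_i$; this yields your inequality (with the integral over $K'$ replaced by $K$ and a harmless $O(r_i^{2-2s})$ error) without any variable doubling. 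If you do carry out doubling instead, you would in effect be reproving that lemma. Second, you assert $v_i(x_i)>0$ and that the first term \emph{converges}; neither is justified (only $v_i(x_i)<a_iu_i(x_i)\le C_i^{-1}$ is known, so only $\limsup_iv_i(x_i)\le 0$), but only the one-sided bound $\limsup_i\int_{K'}\frac{v_i(x_i)-v_i(y)}{|x_i-y|^{n+2s}}\,\d y\le-\int_{K'}\frac{v(y)}{|\tilde x-y|^{n+2s}}\,\d y$ is needed, and that follows from the uniform convergence on $K'$ together with Fatou, as in the paper. With these repairs the argument is complete.
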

\begin{proof}
    Without loss of generality, by taking $K$ a bit larger and using $v>0$ in $\Omega$, we can assume that if $\limsup_{i \to \infty } v_i(x_i) \leq 0$ and $\lim_{i \to \infty} x_i =x \in \partial \Omega \, $, then the following inequality is strict $$\limsup_{i \to \infty}f_i(x_i) > -2  \int_{K} \frac{v(y)}{|x-y|^{n+2s}} \d y.$$ Also, by normalization $\frac{u_i}{\|u_i\|_{\L^{\infty}(\Omega)}+1}$, we can assume that $\|u_i\|_{\L^{\infty}(\Omega)} \leq 1$. After doing this normalization for $u_i$, the assumptions $0\leq u_i \leq v_i$ still remain true. For the sake of contradiction, we choose a sequence of points $x_i \in \overline \Omega$ and $C_i>1$ such that $\lim_{i \to \infty} C_i = \infty$ and 
    \begin{align}
    \label{eq:points}
    \frac{1}{C_i}u_i(x_i)-v_i(x_i) = \max_{\overline \Omega} \frac{1}{C_i}u_i-v_i >0.  
    \end{align}
    Note that we used the fact that the continuous function $\frac{1}{C_i}u_i-v_i$ takes its maximum at a point in $\overline \Omega$. Since $u_i|_{\partial \Omega} \leq  v_{i}|_{\partial \Omega}$, we have $x_i \in \Omega$. Let $\tilde{x}$ be a limit point of $x_i$ in $\overline{\Omega}$. Now, we rule out the possibility of $\tilde{x} \in \Omega$. Since otherwise due to the uniform convergence
    $$\frac{1}{C_i}u_i(x)-v_i(x) \to -v(x), \quad \text{uniformly in every }  \Omega^\prime \Subset \Omega,$$
    we have the following contradiction
    $$0 \leq \lim_{i \to \infty }\frac{1}{C_i}u_i(x_i)-v_i(x_i) = -v(\tilde{x}) <0.$$
   Hence, $\tilde{x} \in \partial \Omega$. Note that by \cite[Lem. 5.8]{CS2}, we have
\begin{equation}
\label{eq:viscosity}
     (-\Delta)^s\biggl(\frac{1}{C_i} u_i- v_i\biggr)(x)   \leq \frac{g_i(x)}{C_i} - f_i(x), \quad \text{ in } \Omega ,
\end{equation}
in the viscosity sense. Now, consider a sequence $ 0<r_i < \frac{1}{2^i}, \phi_i \in C^2( B(x_i,r_i))$, such that $\overline {B(x_i,r_i)} \subset \Omega$, where
\begin{align*}
    &\phi_i(x) := |x-x_i|^2 + \frac{1}{C_i}u_i(x_i) - v_i(x_i) \quad \text{ in } B(x_i,r_i) \setminus \{x_i\},
\end{align*}
 and define the functions \begin{align}
 \label{eq:touchabove}
    &w_i := \begin{cases}
        \phi_i \quad &\text{ in }  B(x_i,r_i),\\
        \frac{1}{C_i} u_i- v_i \quad &\text{ in } \R^n \setminus  B(x_i,r_i).
    \end{cases}
\end{align} After passing to a subsequence and taking $r_i$ small enough, depending on $K$, we have  $    B(x_i,2r_i) \subset \Omega \setminus K$ for every $i$ and $x_i \to \tilde{x} \in \partial \Omega$ as $i \to \infty$.
Since $r_i \to 0$ and $x_i \to \tilde x \in \partial \Omega$ as $i \to \infty$, we have the uniform convergence $ w_i \to -v$ on every $K \Subset \Omega$ as $i \to \infty$ and $\dist(x_i,K) > \delta$ for a $\delta>r_i$ and every $i$. Also, by
    $ v_i(x_i) <\frac{1}{C_i}  u_i(x_i)$, we have
    ´\begin{align}
    \label{eq:limsup}
        \limsup_{i \to \infty}  v_i(x_i) \leq 0.
    \end{align} 
    Now, by \eqref{eq:viscosity} and the fact that $\phi_i$ touches $\frac{1}{C_i} u_i -v_i$ at $x=x_i$ from above, it follows that
    \[
    \begin{aligned}
 & \limsup_{i \to \infty} f_i(x_i) \leq \limsup_{i \to \infty} 2 \int_{\R^n} \frac{w_i(y)-w_i(x_i)}{|y-x_i|^{n+2s}}\, \d y \\& \leq  \limsup_{i \to \infty} 2 \int_{B(x_i,r_i)} \frac{\phi(y)-\phi_i(x_i)}{|y-x_i|^{n+2s}}
  \, \d y \\& + \limsup_{i \to \infty}
2\int_{\R^n \setminus (K \cup B(x_i,r_i) )} \frac{w_i(y)-w_i(x_i)}{|x_i-y|^{n+2s}} \, \d y
   \\&+ \limsup_{i \to \infty} 2
\int_{K }   \frac{w_i(y)-w_i(x_i)}{|y-x_i|^{n+2s}}\, \d y = \mathcal{I}_{1} + \mathcal{I}_{2}+ \mathcal{I}_{3}.
    \end{aligned}
    \]
 The left hand side satisfies $\limsup_{i \to \infty} f_i(x_i)> - 2\int_{K} \frac{v(y)}{|\tilde{x}-y|^{n+2s}} \d y$. Note that
 
 \begin{align}\label{eq:negative}
     w_i(y)- w_i(x_i)   = \frac{1}{C_i}u_i(y)-v_i(y) -\biggl(\frac{1}{C_i}u_i(x_i)-v_i(x_i)\biggr) \leq 0,
 \end{align}
  for $y \in \R^n \setminus B(x_i,r_i)$ by definition of $w_i$,  \eqref{eq:points}, and $C_i >B.$ Hence, \begin{align*}
     \mathcal{I}_{2} \leq 0.
 \end{align*}
 Also,
    $$  \mathcal{I}_{1} =   \limsup_{i \to \infty } 2 \int_{B(x_i,r_i)} \frac{|y-x_i|^2}{|y-x_i|^{n+2s}} \, \d y = \limsup_{i \to \infty } 2 r_i^{(2-2s)}=0,$$
    and since $|x_i-y| > \delta,$ $w_i$ converges uniformly to $-v$ on $K$, and \eqref{eq:limsup}, it is implied that
    \begin{align*}
         \mathcal{I}_{3}  & \leq 2\int_{K} \frac{ \limsup_{i \to \infty}  v_i(x_i) -  v(y)}{|\tilde{x}- y|^{n+2s}} \d y \\& \leq  -2 \int_{K}\frac{v(y)}{|\tilde{x}-y|^{n+2s}}  \d y,
    \end{align*}   by \eqref{eq:negative}, Fatou's lemma, and \eqref{eq:limsup}.
    In conclusion, we arrive at 
    $$-2 \int_{K} \frac{v(y)}{|\tilde{x}-y|^{n+2s}} \d y  < \mathcal{I}_{1}+\mathcal{I}_{2}+\mathcal{I}_{3} \leq -2  \int_{K} \frac{v(y)}{|\tilde{x}-y|^{n+2s}}  \d y. $$
 Hence, we get a contradiction.
\end{proof}

\begin{defn}
    Define the torsion function $u_{\tor} \in \L^1_{2s}(\Omega) \cap C(\overline{\Omega})$ by the following properties:
\begin{align*}
u_{\tor}=0, \quad &\text{in } \R^n \setminus \Omega,\\
    (-\Delta)^s u_{\tor} = 1,\quad &\text{in } \Omega,
\end{align*}
in the viscosity sense, see Lemma \ref{lem:existenceweaksol} and Proposition \ref{prop:equaivweakviscos} for the existence if $\Omega$ has a regular boundary for the $s$-fractional Laplacian.
\end{defn}
\begin{lem}
\label{lem:maxprinciple}
    Let $u \in \L^{1}_{2s}(\Omega)$ be a viscosity supersolution of $(-\Delta)^s u \geq f$ in $\Omega$ . Assume that there is a point $x_0 \in \Omega$ such that $u(x_0) = \inf_{\R^n} u$, then
    \begin{align*}
         2 \int_{\R^n} \frac{u(x_0)-u(y)}{|x_0-y|^{n+2s}} \, \d y \geq f(x_0).
    \end{align*}
\end{lem}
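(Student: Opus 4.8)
The plan is to reduce the statement to a single application of the viscosity test‑function inequality, followed by two harmless limits. By Definition~\ref{def:viscosity}, the hypothesis ``$(-\Delta)^s u \geq f$ in $\Omega$ in the viscosity sense'' means precisely that $-u$ is lower semicontinuous in $\Omega$ and that $(-\Delta)^s(-u) \leq -f$ in $\Omega$ in the viscosity sense. So I would work with $-u$, which has a global \emph{maximum} over $\R^n$ attained at the interior point $x_0$, and test that subsolution property against an explicit smooth function.

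First I would construct the test function. Fix $r>0$ small enough that $B(x_0,r)\subset\Omega$, and for $\epsilon>0$ set $\phi_\epsilon(x):=-u(x_0)+\epsilon|x-x_0|^2$ on $U:=B(x_0,r)$. This is smooth, agrees with $-u$ at $x_0$, and — since $-u(x)\le -u(x_0)$ for every $x$ — satisfies $\phi_\epsilon(x)> -u(x)$ for all $x\in U\setminus\{x_0\}$; hence $\phi_\epsilon$ touches $-u$ at $x_0$ from above in the sense of Definition~\ref{def:viscosity}. The quadratic correction is exactly what makes the touching strict even when $u$ is locally constant near $x_0$ (where a bare constant test function would fail), and it also makes the resulting integral convergent near $x_0$ without any principal value.

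Next I would insert the associated function $w_\epsilon$ (equal to $\phi_\epsilon$ on $U$ and to $-u$ on $\R^n\setminus U$, which lies in $\L^1_{2s}(\R^n)$) into the defining inequality, using $w_\epsilon(x_0)=-u(x_0)$. Splitting $\R^n$ into $B(x_0,r)$ and its complement, the inside part contributes $-2\epsilon\int_{B(x_0,r)}|y-x_0|^{2-n-2s}\,\d y$, a finite positive multiple of $r^{2-2s}$ because $s<1$, while the outside part contributes $2\int_{\R^n\setminus B(x_0,r)}\frac{u(y)-u(x_0)}{|x_0-y|^{n+2s}}\,\d y$, whose integrand is nonnegative because $x_0$ is a global minimum of $u$ and which is integrable near infinity since $u\in\L^1_{2s}(\R^n)$. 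The viscosity inequality for $-u$ then reads
\[
-2\epsilon\int_{B(x_0,r)}\frac{|y-x_0|^2}{|x_0-y|^{n+2s}}\,\d y+2\int_{\R^n\setminus B(x_0,r)}\frac{u(y)-u(x_0)}{|x_0-y|^{n+2s}}\,\d y\le -f(x_0).
\]
Letting $\epsilon\to 0$ with $r$ fixed removes the first term; then letting $r\downarrow 0$ I would invoke monotone convergence (legitimate precisely because the remaining integrand is nonnegative) to obtain $\int_{\R^n}\frac{u(y)-u(x_0)}{|x_0-y|^{n+2s}}\,\d y\le -\tfrac12 f(x_0)$, which rearranges to the claim.

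I do not expect a genuine obstacle here beyond carefully tracking the sign conventions between $u$, $-u$ and $f$. The one point deserving a word of care is that the integral appearing in the conclusion is a priori only well defined in $[-\infty,0]$ — its integrand has a fixed sign but no a priori local bound near $x_0$ — so part of the content of the lemma is that this integral is in fact bounded below; this is delivered automatically by the monotone‑convergence step above, which produces the finite upper bound $-\tfrac12 f(x_0)$.
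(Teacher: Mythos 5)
Your proof is correct and follows essentially the same route as the paper: test the viscosity inequality with a quadratic perturbation of the constant $u(x_0)$ touching at the global minimum, split the integral into the ball and its complement, and pass to the limit by monotone convergence on the nonnegative outer integrand. The only (cosmetic) difference is that you use a two-parameter family (fixed radius $r$, coefficient $\epsilon\to 0$, then $r\downarrow 0$), whereas the paper takes the coefficient equal to $1$ and shrinks the radius in a single limit.
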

\begin{proof}
    Notice that the integral on the left-hand side makes sense without $\mathrm{P.V.}$ since we have $u(x_0)-u(y) \leq 0$, although the integral might be $- \infty$. Define $u_{\epsilon}(y)= u(x_0)-|x_0-y|^2$ for $y \in B(x_0,\epsilon)$ and $u_{\epsilon}= u$ in $\R^n \setminus B(x_0,\epsilon)$ for $\epsilon>0$ small enough, such that $B(x_0,\epsilon) \subset \Omega$. Then, $ u_{\epsilon} \leq u$ in $\R^n$, $u_{\epsilon} \in C^{2}(B(x_0,\epsilon))$, and $u_{\epsilon}(x_0) = u(x_0)$. Hence,
    \begin{align*}
 &2 |B(0,1)|\epsilon^{2-2s}+ 2 \int_{\R^n
 \setminus B(x_0,\epsilon)}  \frac{u(x_0)-u(y)}{|x_0-y|^{n+2s}} \, \d y\\&= 2\int_{ B(x_0,\epsilon)}  \frac{|x_0-y|^2}{|x_0-y|^{n+2s}}\, \d y+ 2\int_{\R^n \setminus B(x_0,\epsilon)}  \frac{u_{\epsilon}(x_0)-u_{\epsilon}(y)}{|x_0-y|^{n+2s}} \, \d y \\ &= 2\int_{\R^n} \frac{u_{\epsilon}(x_0)-u_{\epsilon}(y)}{|x_0-y|^{n+2s}}  \, \d y  \geq f(x_0).
\end{align*}
Letting $\epsilon \to 0$ and using the monotone convergence theorem, it is obtained that
\begin{align*}
 f(x_0) \leq  2 \int_{\R^n} \frac{u(x_0)-u(y)}{|x_0-y|^{n+2s}} \, \d y.
\end{align*}
\end{proof}
Now, we prove Hopf's lemma. 
\begin{proof}[Proof of Lemma \ref{lem:Hopf's lemma}]
    The assumption 
    \[
    f(x_0) > -2 \int_{\R^n} \frac{u(y)}{|x_0-y|^{n+2s}} \, \d y, \quad \text{if } x_0 \in \Omega, \, u(x_0)=0,
    \]
    ensures that $u> 0$ in $\Omega$, using Lemma \ref{lem:maxprinciple}. Hence, by the assumption
    $$\limsup_{x \to x_0} f(x) \geq -2\int_{K} \frac{u(y)}{|x_0 - y|^{n+2s}} \, \d y, \quad  \text{if } x_0 \in \partial \Omega, \lim_{x \to x_0} u(x) =0,$$
    we can apply Lemma \ref{thm:stabilityharnack} to the sequences $u_i=u_{\tor},v_i =u$ to obtain $u \geq C u_{\tor}$ for a constant $C >0.$
\end{proof}

The following result is another version of Hopf's Lemma for the viscosity supersolutions.
\begin{lem}
\label{lem:weakhopf}
    Let $u \in \L^1_{2s}(\R^n) \cap C(\overline \Omega)$, satisfying $u\geq 0$ in $\R^n \setminus \Omega$ and $$(-\Delta)^s u  \geq - 2 (\mathrm{diam}\, \Omega)^{-(n+2s)}\int_{K} u(y)\, \d y, \quad \text{ in the viscosity sense in } \Omega,$$ where $K \Subset \Omega$. Then, either $u=0$ a.e. in $\R^n \setminus K$ or $u>0$ in $\Omega$ and $u \geq C u_{\tor}$ in $\Omega$ for a constant $C>0.$
\end{lem}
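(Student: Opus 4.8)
The plan is to follow the two-part template used for Lemma~\ref{lem:Hopf's lemma}: first invoke Lemma~\ref{lem:maxprinciple} (the strong maximum principle for viscosity supersolutions) to pin down the sign of $u$ and to isolate the degenerate alternative, and then feed $u$ together with the torsion function $u_{\tor}$ into the stability estimate of Lemma~\ref{thm:stabilityharnack}. A dichotomy appears in the statement precisely because the right-hand side $f:=-2(\diam\Omega)^{-(n+2s)}\int_K u\,\d y$ is exactly the borderline value for which the \emph{strict} interior inequality required in Lemma~\ref{lem:Hopf's lemma} degenerates to an equality, so one cannot quote that lemma directly. I expect this degenerate case to be the main obstacle: it has to be handled by a two-sided pinching in Lemma~\ref{lem:maxprinciple}, and one must check that the pinching genuinely collapses to $u\equiv 0$ a.e.\ off $K$.

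\emph{Step 1: positivity.} Since $u\in C(\overline\Omega)$ and $u\ge 0$ on $\R^n\setminus\Omega$, a negative minimum of $u$ over $\overline\Omega$ would be attained at some interior point $x_0\in\Omega$, and then $u(x_0)=\inf_{\R^n}u$. Lemma~\ref{lem:maxprinciple} gives $2\int_{\R^n}\frac{u(x_0)-u(y)}{|x_0-y|^{n+2s}}\,\d y\ge f$. Split the integral over $K$ and over $\R^n\setminus K$: on $\R^n\setminus K$ the integrand is $\le 0$ because $u(x_0)-u(y)\le 0$; on $K$ one has $|x_0-y|\le\diam\Omega$, hence $\frac{u(x_0)-u(y)}{|x_0-y|^{n+2s}}\le\frac{u(x_0)-u(y)}{(\diam\Omega)^{n+2s}}$, and integrating yields the bound $\frac{2|K|u(x_0)-2\int_K u}{(\diam\Omega)^{n+2s}}$. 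Inserting the exact value of $f$, the terms containing $\int_K u$ cancel and one is left with $0\le|K|u(x_0)$, contradicting $u(x_0)<0$. Hence $u\ge 0$ on $\overline\Omega$, and therefore on all of $\R^n$.

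\emph{Step 2: the dichotomy.} Now $u\ge 0$ everywhere, so either $u>0$ in $\Omega$ or $u(x_0)=0=\inf_{\R^n}u$ for some $x_0\in\Omega$. In the second case Lemma~\ref{lem:maxprinciple} gives $-2\int_{\R^n}\frac{u(y)}{|x_0-y|^{n+2s}}\,\d y\ge f$; on the other hand, using $u\ge 0$ and $|x_0-y|\le\diam\Omega$ on $K$, the same splitting shows $-2\int_K\frac{u(y)}{|x_0-y|^{n+2s}}\,\d y\le f$ and $-2\int_{\R^n\setminus K}\frac{u(y)}{|x_0-y|^{n+2s}}\,\d y\le 0$, so the left-hand side is also $\le f$. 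Both inequalities must then be equalities, i.e.\ $\int_K u(y)\bigl(|x_0-y|^{-(n+2s)}-(\diam\Omega)^{-(n+2s)}\bigr)\,\d y=0$ and $\int_{\R^n\setminus K}\frac{u(y)}{|x_0-y|^{n+2s}}\,\d y=0$. Since $u\ge 0$ and the bracket is strictly positive for a.e.\ $y$ (it vanishes only on the null set $\{|x_0-y|=\diam\Omega\}$), the first identity forces $u=0$ a.e.\ on $K$ and the second forces $u=0$ a.e.\ on $\R^n\setminus K$; hence $u\equiv 0$ a.e.\ in $\R^n$, in particular a.e.\ in $\R^n\setminus K$. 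This is the first alternative of the statement, so from here on I may assume $u>0$ in $\Omega$.

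\emph{Step 3: comparison with the torsion function.} With $u>0$ in $\Omega$, apply Lemma~\ref{thm:stabilityharnack} to the constant sequences $v_i\equiv u$, $u_i\equiv u_{\tor}$, $f_i\equiv f$, $g_i\equiv 1$, and limit $v\equiv u$. The uniform convergence hypothesis is trivial, $v>0$ in $\Omega$ holds by assumption, $(-\Delta)^s u\ge f$ and $(-\Delta)^s u_{\tor}\le 1$ hold in the viscosity sense by hypothesis and by the definition of $u_{\tor}$, and $0\le u_{\tor}=0\le u$ on $\R^n\setminus\Omega$. The boundary condition on $g_i$ is immediate, and the boundary condition on $f_i$ reads $f\ge -2\int_K\frac{u(y)}{|x-y|^{n+2s}}\,\d y$ for $x\in\partial\Omega$, which is once more just $|x-y|\le\diam\Omega$ on $K$ together with $u\ge 0$. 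Lemma~\ref{thm:stabilityharnack} then yields $u_{\tor}\le C(\|u_{\tor}\|_{\L^\infty(\Omega)}+1)\,u$ in $\Omega$, i.e.\ $u\ge C'u_{\tor}$ with $C'=\bigl(C(\|u_{\tor}\|_{\L^\infty(\Omega)}+1)\bigr)^{-1}>0$. Finiteness of all the nonlocal integrals above is guaranteed by the monotone-convergence argument already present in Lemma~\ref{lem:maxprinciple}, so none of them is $-\infty$.
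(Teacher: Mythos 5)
Your proof is correct and follows essentially the same route as the paper's: Lemma~\ref{lem:maxprinciple} at an interior minimum combined with the bound $|x_0-y|\le\diam\Omega$ on $K$ to force the degenerate alternative, then Lemma~\ref{thm:stabilityharnack} applied to the constant sequences $v_i\equiv u$, $u_i\equiv u_{\tor}$. The only differences are organizational (you separate nonnegativity from the zero-minimum case, where the paper treats $u(x_0)=\inf_{\R^n}u\le 0$ in one stroke) and that your equality analysis yields the slightly stronger conclusion $u\equiv 0$ a.e.\ in all of $\R^n$ in the first alternative.
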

\begin{proof}  
Assume that there exists a point $x_0 \in \Omega$, satisfying  $u(x_0)= \inf_{\R^n} u \leq 0$. Then, by Lemma \ref{lem:maxprinciple}, 
\begin{align*}
  2\int_{\R^n} \frac{u(x_0)-u(y)}{|x_0-y|^{n+2s}} \, \d y &\geq - 2 (\mathrm{diam}\, \Omega)^{-(n+2s)}\int_{K} u(y)\, \d y \\
 &\geq 2 (\mathrm{diam}\, \Omega)^{-(n+2s)}\int_{K} (u(x_0)- u(y))\, \d y \\& \geq 2\int_{K} \frac{u(x_0)- u(y)}{|x_0-y|^{n+2s}} \, \d y. 
\end{align*}
In conclusion, $u(y) = u(x_0) \leq 0$ a.e. in $\R^n \setminus K $. Combining this result with the assumption $u \geq 0$ in $\R^n \setminus \Omega$, we derive $u=0$ a.e. in $\R^n \setminus K.$ 
  Finally, let $u > 0$ in $\Omega$. Then,
 $$(-\Delta)^s u \geq - 2 (\mathrm{diam}\, \Omega)^{-(n+2s)}\int_{K} u(y)\, \d y \geq - 2\int_{K} \frac{ u(y)}{|x_0-y|^{n+2s}} \, \d y, \quad \text{in } \Omega,$$  in the viscosity sense.
 Hence, by Lemma \ref{thm:stabilityharnack}, we conclude that $u \geq C u_{\tor}$ in $\Omega$ for a constant $C>0$. 
\end{proof}
Now, we aim at proving Theorem \ref{thm:boundaryharnack}. First, we need the following result.
\begin{cor}
\label{Cor:boundaryharnack}
    Let $u,v$ belong to $\L^{\infty}(\R^n) \cap C(\overline \Omega)$ and satisfy 
\begin{equation*}  
\begin{aligned}
     u \geq 0 ,\, v > 0,   \quad & \text{in } \Omega,\\
0 \leq u  \leq  v, \quad & \text{in } \R^n \setminus \Omega,
\end{aligned}
\end{equation*}
and \begin{equation*}
     (-\Delta)^s u \leq 1 ,\, (-\Delta)^s v \geq f, \quad  \text{in } \Omega,
\end{equation*}
 in the viscosity sense for a function $f\in C(\Omega)$. Assume, for $K \Subset \Omega$, we have
      \begin{align*}
      \limsup_{i \to \infty} f(x_i)     \geq  - 2 \int_{K} \frac{v(y)}{|x-y|^{n+2s}} \d y, \quad \text{if } \lim_{i \to \infty} x_i =x \in \partial \Omega, \, \lim_{i \to \infty } v(x_i) = 0,
    \end{align*}
   at every point $x$ on the boundary. Then,
\begin{equation*}
    u \leq C v, \quad \text{in } \Omega,
\end{equation*}
where the constant $C$ is independent of $u$ but dependent on $v$. 
\end{cor}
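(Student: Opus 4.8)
The plan is to deduce this from the stability lemma, Lemma \ref{thm:stabilityharnack}, by feeding it \emph{constant} sequences. Set $u_i := u$, $v_i := v$, $f_i := f$, and $g_i := 1$ for every $i$. I must check that these choices satisfy the hypotheses of Lemma \ref{thm:stabilityharnack}. The convergence $v_i \to v$ uniformly on every $\Omega' \Subset \Omega$ is trivially true since the sequence is constant, and $v>0$ in $\Omega$ is given. The condition on $f_i$ at the boundary is exactly the displayed hypothesis on $f$ in the statement of the corollary, after noting that $\limsup_{i\to\infty} v_i(x_i) \le 0$ with $v_i \equiv v$ continuous on $\overline\Omega$ forces $\lim_{i\to\infty} v(x_i) = 0$ whenever $x_i \to x \in \partial\Omega$ (using $v \ge 0$ on $\partial\Omega$, which follows from $0 \le u \le v$ in $\R^n \setminus \Omega$ together with continuity); so the two boundary conditions match. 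The condition $\limsup_{i\to\infty} g_i(x_i) = 1 \le 1$ holds with room to spare, and $0 \le u_i \le v_i$ in $\R^n \setminus \Omega$ is assumed. Finally $(-\Delta)^s v_i \ge f_i$ and $(-\Delta)^s u_i \le g_i = 1$ in $\Omega$ in the viscosity sense are precisely the hypotheses, and $u, v \in \L^1_{2s}(\Omega) \cap C(\overline\Omega)$ follows from $u, v \in \L^\infty(\R^n) \cap C(\overline\Omega)$ since a bounded function lies in $\L^1_{2s}$.

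Applying Lemma \ref{thm:stabilityharnack}, I obtain an integer $N$ and a constant $C_0 > 0$ with $u_i \le C_0(\|u_i\|_{\L^\infty(\Omega)} + 1) v_i$ in $\Omega$ for all $i \ge N$. Since the sequences are constant, this reads $u \le C_0(\|u\|_{\L^\infty(\Omega)} + 1) v$ in $\Omega$. Setting $C := C_0(\|u\|_{\L^\infty(\Omega)} + 1)$ gives the claim $u \le Cv$ in $\Omega$. The constant $C_0$ produced by Lemma \ref{thm:stabilityharnack} depends only on $\Omega$, $K$, $n$, $s$ and the function $v$ (through the boundary strictness that is arranged by enlarging $K$), not on $u$; the remaining dependence on $u$ enters solely through the explicit factor $\|u\|_{\L^\infty(\Omega)} + 1$. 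This is consistent with the assertion that $C$ is independent of $u$ except for this controlled $\L^\infty$ factor, but to match the statement literally I will simply absorb $\|u\|_{\L^\infty(\Omega)}$ into $C$ and remark that the only $u$-dependence is through $\|u\|_{\L^\infty(\Omega)}$.

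The only genuinely delicate point — and the step I expect to require the most care — is the matching of the boundary hypotheses: verifying that "$\limsup_{i\to\infty} v_i(x_i) \le 0$ along a sequence $x_i \to x \in \partial\Omega$" in the hypothesis of Lemma \ref{thm:stabilityharnack} is equivalent, for the constant sequence $v_i \equiv v$ with $v \in C(\overline\Omega)$ and $v \ge 0$ on $\partial\Omega$, to the condition "$\lim_{i\to\infty} v(x_i) = 0$" appearing in the corollary. Indeed $v(x_i) \to v(x) \ge 0$ by continuity, so $\limsup v(x_i) = v(x)$, and this is $\le 0$ iff $v(x) = 0$ iff $v(x_i) \to 0$; everything is consistent. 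With this observation in hand the rest is a direct invocation of the lemma.
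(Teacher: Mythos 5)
There is a genuine gap, and it concerns exactly the clause you flagged as needing to be ``matched literally'': the corollary asserts that $C$ is \emph{independent of $u$}, and your argument does not deliver that. Two things go wrong. First, Lemma \ref{thm:stabilityharnack} applied to the constant sequences $u_i\equiv u$, $v_i\equiv v$ only produces \emph{some} constant $C_0$ for that particular pair of sequences; the lemma (itself proved by contradiction for a fixed pair of sequences) makes no claim that $C_0$ depends only on $\Omega,K,n,s,v$, so your assertion that $C_0$ is independent of $u$ is not justified by its statement. Second, even granting that, your final constant is $C_0(\|u\|_{\L^\infty(\Omega)}+1)$, which manifestly depends on $u$; ``absorbing'' $\|u\|_{\L^\infty(\Omega)}$ into $C$ changes the statement being proved. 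What you end up with is the qualitative fact $\sup_\Omega u/v<\infty$ for the given $u$, not the uniform bound the corollary claims (and which is what makes it usable).

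The missing ingredient is an a priori $\L^\infty$ bound on $u$ that is independent of $u$: since $(-\Delta)^s u\le 1$ in $\Omega$ and $0\le u\le v$ in $\R^n\setminus\Omega$, the comparison principle (\cite[Cor.~4.6]{CS2}) gives $0\le u\le u_{\tor}+\|v\|_{\L^\infty(\R^n\setminus\Omega)}$ in $\Omega$, so $\|u\|_{\L^\infty(\Omega)}$ is controlled by $\Omega,n,s$ and $v$ only. The paper exploits this by arguing by contradiction at the level of the corollary: it takes a sequence of admissible functions $u_i$ with $\sup_\Omega u_i/v>C_i\to\infty$, uses the comparison bound to see that the $u_i$ are uniformly bounded, and then applies Lemma \ref{thm:stabilityharnack} to the genuinely varying sequence $(u_i)$ together with $v_i\equiv v$ to contradict the blow-up. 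If you insert the comparison bound, your more direct route can be repaired, but as written the uniformity in $u$ is not established.
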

\begin{proof}
    We argue by contradiction. If there is no such universal constant $C$, then we can pick a sequence of non-negative functions $u_i \in \L^{\infty}(\R^n) \cap C(\overline \Omega)$ and constants $C_i$ diverging to infinity, such that 
\begin{equation}
\begin{aligned}\label{eq:sub-sol-uniform}
 \sup_{\Omega} \frac{u_i}{v} > C_i,&\\ 
(-\Delta)^s u_i \leq 1,& \quad \text{in } \Omega, \\
0 \leq u_i \leq  v,& \quad \text{in } \R^n \setminus \Omega.
\end{aligned}
\end{equation}
Hence, $0\leq u_i \leq  u_{\tor}+ \|v\|_{\L^{\infty}(\R^{n}\setminus \Omega)}$ in $\Omega$, by \eqref{eq:sub-sol-uniform} and \cite[Cor. 4.6]{CS2}. In conclusion, the sequence $u_i$ is uniformly bounded in $\Omega$. Thus, we are in the position to apply Lemma \ref{thm:stabilityharnack} for sequences $v_i=v$ and $u_i$ the same as here. This gives us $u_i \leq C v$ which leads to a contradiction.
\end{proof}

\begin{proof}[Proof of Theorem \ref{thm:boundaryharnack}]
Assume for the sake of contradiction that there are no such constants $C_1$ and $C_2$. That is there exist sequences $u_i$ and $v_i$ with $u_i(x_0) \geq D, v_i(x_0) \geq D$ ($\|u_i\|_{\L^p(\Omega \setminus K)} \geq D, \|v_i\|_{\L^p(\Omega \setminus K)} \geq D$), satisfying the conditions of the Theorem \ref{thm:boundaryharnack}, that is
\begin{align*}
    u_i>0 , \,v_i>0 \quad & \text{in } 
\Omega,\\
 0\leq v_i = u_i=g_i \leq 1 \quad & \text{in }  \R^n \setminus \Omega,
    \end{align*}
    \begin{align*}
   - 2 (\mathrm{diam}\, \Omega)^{-(n+2s)}\int_{K} u_i(y)\, \d y \leq (-\Delta)^s u_i \leq  1, \quad & \text{in } \Omega,\\
    -2 (\mathrm{diam}\, \Omega)^{-(n+2s)}\int_{K} v_i(y) \, \d y  \leq   (-\Delta)^s v_i \leq 1, \quad &\text{in } \Omega,
    \end{align*}
such that either $\lim_{i \to \infty} \inf_{\Omega} \frac{v_i}{u_i} = 0$ or $\lim_{i \to \infty} \sup_{\Omega} \frac{v_i}{u_i}=\infty$. Then, by \cite[Cor. 4.6]{R}, 
\begin{align}
\label{eq:boundabove}
   0 \leq u_{i} \leq u_{\tor}+\|g_i\|_{\L^{\infty}(\R^n \setminus \Omega)}, \,
    0\leq v_i \leq u_{\tor}+\|g_i\|_{\L^{\infty}(\R^n \setminus \Omega)}, \quad \text{in } \R^n.
\end{align}
and, by interior Hölder regularity, see \cite[Thm. 2.1]{CS2}, and the Arzelà–Ascoli theorem, up to a subsequence, we can assume that both $v_i$ and $u_i$ converge uniformly on every compact subset of $\Omega$ to non-negative limits, say 
\[
u_i \to u \in C(\Omega) \qquad \text{and} \qquad v_i \to v \in C(\Omega).
\]
Using, \eqref{eq:boundabove}, $u_i$ and $v_i$ are uniformly bounded and Lebesgue dominated convergence implies
\begin{align*}
   \lim_{i \to \infty} \|u_i- u\|_{\L^p(\Omega)} + \|v_i- v\|_{\L^p(\Omega)} = 0.
\end{align*}
 Note that the continuity of the limits on $\Omega$ is a consequence of local uniform convergence. Now, define the auxiliary functions $\tilde{u}_i$ as 
 \[
 \tilde{u}_i(x)=\begin{cases}
     u_i(x), \quad &\text{if} \quad x \in \Omega, \\
      0, \quad &\text{if} \quad x \in \R^n \setminus \Omega,
 \end{cases}
 \]
 and define $\tilde{v}_i$ in a similar way. As $u_i$ and $v_i$ are nonnegative, $\tilde{u}_i$ and $\tilde{v}_i$ satisfy
 \[
 \begin{aligned}
    (-\Delta)^s \tilde{v}_i &\geq   - 2(\mathrm{diam}\, \Omega)^{-(n+2s)}\int_{K} v_i(y) \, \d y, \quad  \text{in } \Omega, \\
    (-\Delta)^s \tilde{u}_i &\geq   - 2(\mathrm{diam}\, \Omega)^{-(n+2s)}\int_{K} u_i(y) \, \d y, \quad  \text{in } \Omega.
 \end{aligned}
 \]
 Notice that now $\tilde{u}_i$ and $\tilde{v}_i$, converge almost everywhere in $\R^n$. 
 In light of local uniform convergence in $\Omega$, using the stability property of weak solutions \cite[Lem. 4.5]{CS2}, we obtain \begin{align*}
\label{eq:positive}
    (-\Delta)^s v &\geq   - 2(\mathrm{diam}\, \Omega)^{-(n+2s)}\int_{K} v(y) \, \d y, \quad  \text{in } \Omega, \\
    (-\Delta)^s u &\geq   - 2(\mathrm{diam}\, \Omega)^{-(n+2s)}\int_{K} u(y) \, \d y, \quad  \text{in } \Omega,
\end{align*}
where $u$ and $v$ are extended to be $0$ outside of $\Omega$.
Moreover, we have
$$u(x_0) \geq D, \quad v(x_0)\geq D \; (\|u\|_{\L^p(\Omega \setminus K)} \geq D, \|v\|_{\L^p(\Omega \setminus K)} \geq D).$$
Therefore, $u$ and $v$ are not identically zero in $\Omega \setminus K$. Hence, using Lemma \ref{lem:maxprinciple}, we arrive at 
$$v>0 \quad \text{in }\Omega \qquad  \text{and} \qquad u>0 \quad \text{in } \Omega.$$
As 
\[
\lim_{i \to \infty}  - 2(\mathrm{diam}\, \Omega)^{-(n+2s)}\int_{K} v_i(y) \, \d y =- 2\int_{K} \frac{v(y)}{(\mathrm{diam}\, \Omega)^{n+2s}} \, \d y \geq -2 \int_{K} \frac{v(y)}{|x-y|^{n+2s}} \d y,
\]
  and $(-\Delta)^s u_i \leq 1$ in $\Omega$ in the viscosity sense, the sequence $v_i$ satisfies the assumptions of Lemma \ref{thm:stabilityharnack}. In conclusion, $\frac{1}{C} \leq \frac{v_i}{u_i}$ in $\Omega$ for some constant $C>0$ and large enough $i$, and $\lim_{i \to \infty} \inf_{\Omega} \frac{v_i}{u_i}=0$ cannot occur.
 One can similarly verify the conditions
\[
\lim_{i \to \infty}  - 2(\mathrm{diam}\, \Omega)^{-(n+2s)}\int_{K} u_i(y) \, \d y =- 2\int_{K} \frac{v(y)}{(\mathrm{diam}\, \Omega)^{n+2s}} \, \d y \geq -2 \int_{K} \frac{u(y)}{|x-y|^{n+2s}} \d y,
\]
  and $(-\Delta)^s v_i \leq 1$ in $\Omega$ in the viscosity sense. Using Lemma \ref{thm:stabilityharnack}, with the role of $u_i$ and $v_i$ reversed, we arrive at $C \leq \frac{u_i}{v_i}$ in $\Omega$ for some constant $C>0$ and large enough $i$. Hence, the assumption $\lim_{i \to \infty} \sup \frac{v_i}{u_i}= \infty$ cannot occur.
\end{proof}

The set $K$ in Theorem \ref{thm:boundaryharnack} can be of measure zero or even empty in the limiting case. For an empty set $K$, the statement of the theorem reads as follows:

\begin{cor}
   Let $u,v \in C(\overline \Omega) \cap \L^{\infty}(\R^n)$ satisfy  
    \begin{align*}
    u>0 , \,v>0 \quad & \text{in } \; \Omega,\\
0\leq  u = v \leq 1  \quad & \text{in }  \R^n \setminus \Omega,
    \end{align*}
  and
    \begin{align*}
    0 \leq (-\Delta)^s u \leq  1, \quad & \text{in } \Omega,\\
    0  \leq   (-\Delta)^s v \leq 1, \quad &\text{ in } \Omega,
    \end{align*}
    in the viscosity sense. Assume that either $u(x_0)\geq D , \; v(x_0)\geq D$ for a fixed point $x_0 \in \Omega $ or $\|u\|_{\L^p(\Omega)} \geq D , \;  \|v\|_{\L^p(\Omega )}\geq D$ for $D>0, 1 \leq p < \infty.$ Then,
    \begin{align*}
       C_1 \leq \frac{u}{v} \leq C_2, \quad \text{in } \Omega,
    \end{align*} where $C_1, C_2$ are positive constants depending on $\Omega, n, s, D, x_0$ or $p$. 
\end{cor}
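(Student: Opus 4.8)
The plan is to deduce this directly from Theorem~\ref{thm:boundaryharnack} by replacing the empty set $K$ there with a conveniently chosen nonempty ball $B \Subset \Omega$. The observation that makes this work is that, for \emph{any} ball $B \Subset \Omega$, the hypotheses here give
\[
-2(\mathrm{diam}\,\Omega)^{-(n+2s)} \int_{B} u(y) \, \d y \le 0 \le (-\Delta)^s u \le 1 \quad \text{in } \Omega,
\]
in the viscosity sense, and identically for $v$; moreover $u,v > 0$ in $\Omega$ and $0 \le u = v \le 1$ in $\R^n \setminus \Omega$ are exactly the boundary hypotheses of Theorem~\ref{thm:boundaryharnack}. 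So the only thing to arrange is that the normalization hypothesis of that theorem — which is stated on $\Omega \setminus K$ rather than on $\Omega$ — is not destroyed when we delete $B$.

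First, for the pointwise alternative, I would simply pick $B = B(z,\rho)$ with $\overline{B} \subset \Omega$ and $x_0 \notin \overline{B}$; then $x_0 \in \Omega \setminus B$ and $u(x_0), v(x_0) \ge D$, so Theorem~\ref{thm:boundaryharnack} applies verbatim with $K = B$. For the $\L^p$ alternative, I would first record that, by the comparison principle \cite[Cor.~4.6]{R}, the bounds $0 \le (-\Delta)^s u \le 1$ in $\Omega$ together with $0 \le u \le 1$ on $\R^n \setminus \Omega$ force $0 \le u \le u_{\tor} + 1$ in $\R^n$, so $\|u\|_{\L^\infty(\R^n)} \le M := \|u_{\tor}\|_{\L^\infty(\overline{\Omega})} + 1$, a constant depending only on $\Omega, n, s$ (and the same for $v$). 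Then, choosing $\rho$ small enough that $M^p \, |B(z,\rho)| < (1 - 2^{-p}) D^p$ and using $\|u\|_{\L^p(\Omega \setminus B)}^p = \|u\|_{\L^p(\Omega)}^p - \|u\|_{\L^p(B)}^p \ge D^p - M^p |B| > 2^{-p} D^p$, one gets $\|u\|_{\L^p(\Omega \setminus B)} > D/2$ and likewise $\|v\|_{\L^p(\Omega \setminus B)} > D/2$, so Theorem~\ref{thm:boundaryharnack} applies with $K = B$ and $D/2$ in place of $D$. In both cases we obtain $C_1 \le u/v \le C_2$ in $\Omega$, and since $B$ depends only on $(\Omega, x_0)$ in the first case and only on $(\Omega, n, s, D, p)$ in the second, the resulting constants depend only on the quantities listed in the statement.

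The step I expect to be the only delicate one is the realization that one cannot literally take $K = \emptyset$ inside the machinery: the stability Lemma~\ref{thm:stabilityharnack} opens by enlarging $K$ slightly to make the boundary inequality $\limsup_i f_i(x_i) \ge -2\int_K v(y)\,|x-y|^{-(n+2s)}\,\d y$ strict, and that enlargement argument relies on $\int_K v > 0$; with $K = \emptyset$ and the right-hand side forced to equal $0$ (here $(-\Delta)^s v \ge 0$ plays the role of $f_i \equiv 0$), strictness fails. Keeping $B$ nonempty but arbitrarily small sidesteps this, and the price is only the elementary bookkeeping above ensuring the normalization survives the excision of $B$ — no new analytic input is needed beyond Theorem~\ref{thm:boundaryharnack} itself.
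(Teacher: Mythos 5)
Your proof is correct: every hypothesis of Theorem~\ref{thm:boundaryharnack} is verified for the chosen ball $B$, the sign condition $u\ge 0$ makes the lower bound $-2(\mathrm{diam}\,\Omega)^{-(n+2s)}\int_B u \le 0 \le (-\Delta)^s u$ automatic, and your excision bookkeeping (avoiding $x_0$, respectively shrinking $|B|$ against the a priori bound $u\le u_{\tor}+1$ so that $\|u\|_{\L^p(\Omega\setminus B)}\ge D/2$) is sound and yields constants with the stated dependencies. The paper itself offers no proof at all --- it simply declares the corollary to be the statement of Theorem~\ref{thm:boundaryharnack} ``for an empty set $K$'' --- so your argument is a legitimate and more explicit route to the same end. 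However, your stated reason for refusing $K=\emptyset$ is a misreading of Lemma~\ref{thm:stabilityharnack}: the opening step there replaces $K$ by a strictly \emph{larger} compact set $K'\Subset\Omega$, and since $v>0$ in $\Omega$ one has $\int_{K'}v>\int_K v\ge 0$, so the non-strict inequality $\limsup_i f_i(x_i)\ge -2\int_K v\,|x-y|^{-(n+2s)}\,\d y$ (which for $K=\emptyset$ is just $\limsup_i f_i(x_i)\ge 0$) does become strict relative to $K'$; strictness is gained from the enlargement, not from $\int_K v>0$. So the theorem does apply verbatim with $K=\emptyset$, which is the paper's (implicit) one-line proof; your small-ball detour is harmless and achieves by hand exactly what that enlargement step does internally, at the modest cost of the normalization bookkeeping.
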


\begin{rem}
    By Proposition \ref{prop:equaivweakviscos}, Lemma \ref{lem:Hopf's lemma} holds if in addition to the assumptions of the Lemma we have $u \in \H^{s}_{\loc}(\Omega)$ and $(-\Delta)^s u\geq f$ locally weakly in $\Omega$. Moreover, Theorem \ref{thm:boundaryharnack} holds also if in addition we assume $u ,v  \in \H^{s}_{\loc}(\Omega)$ and \begin{align*}
  & - 2 (\mathrm{diam}\, \Omega)^{-(n+2s)}\int_{K} u(y)\, \d y \leq (-\Delta)^s u \leq  1,\\
   & -2 (\mathrm{diam}\, \Omega)^{-(n+2s)}\int_{K} v(y) \, \d y  \leq   (-\Delta)^s v \leq 1,
    \end{align*}
  locally weakly in $\Omega$.
    \end{rem}

\def\cprime{$'$} \def\cprime{$'$} \def\cprime{$'$}

\end{document}